\newcommand{\FF}{{\mathbb{F}}}
\newcommand{\fA}{{\mathfrak{A}}}
\newcommand{\fS}{{\mathfrak{S}}}
\newcommand{\bC}{{\mathbf{C}}}
\newcommand{\bG}{{\mathbf{G}}}
\newcommand{\bH}{{\mathbf{H}}}
\newcommand{\bL}{{\mathbf{L}}}
\newcommand{\bM}{{\mathbf{M}}}
\newcommand{\bT}{{\mathbf{T}}}
\newcommand{\bX}{{\mathbf{X}}}
\newcommand{\bY}{{\mathbf{Y}}}
\newcommand{\cE}{{\mathcal{E}}}
\newcommand{\diag}{{\operatorname{diag}}}
\newcommand{\Irr}{{\operatorname{Irr}}}
\newcommand{\SL}{{\operatorname{SL}}}
\newcommand{\PGL}{{\operatorname{PGL}}}
\newcommand{\PSL}{{\operatorname{L}}}
\newcommand{\PSp}{{\operatorname{S}}}
\newcommand{\SU}{{\operatorname{SU}}}
\newcommand{\PSU}{{\operatorname{U}}}
\newcommand{\GL}{{\operatorname{GL}}}
\newcommand\RLG{{R_\bL^\bG}}
\newcommand\RTG{{R_\bT^\bG}}
\newcommand{\tw}[1]{{}^#1\!}
\let\la=\lambda
\newtheorem{thm}{Theorem}[section]
\newtheorem{lem}[thm]{Lemma}
\newtheorem{prop}[thm]{Proposition}
\newtheorem*{thmA}{Main Theorem}
\theoremstyle{definition}
\theoremstyle{remark}
\begin{document}

\title[Brauer's height zero conjecture for quasi-simple groups]{Brauer's height zero conjecture\\ for quasi-simple groups}

\date{\today}

\author{Radha Kessar}
\address{Department of Mathematical Sciences, City University London,
         Northampton Square, London EC1V 01B, U.K.}
\email{Radha.Kessar.1@city.ac.uk}
\author{Gunter Malle}
\address{FB Mathematik, TU Kaiserslautern, Postfach 3049,
         67653 Kaisers\-lautern, Germany.}
\email{malle@mathematik.uni-kl.de}

\thanks{The second author gratefully acknowledges financial support by ERC
  Advanced Grant 291512.}

\keywords{Brauer's height zero conjecture, finite reductive groups}

\subjclass[2010]{20C20, 20C15, 20C33}

\dedicatory{To the memory of   Sandy Green}
\begin{abstract}
We show that Brauer's height zero conjecture holds for blocks of finite
quasi-simple groups. This result is used in Navarro--Sp\"ath's reduction of
this conjecture for general groups to the inductive Alperin--McKay condition
for simple groups.
\end{abstract}

\maketitle

%%\pagestyle{myheadings}
%%\markboth{}{}
%%\markboth{for personal use only}{preliminary}

%%%%%%%%%%%%%%%%%%%%%%%%%%%%%%%%%%%%%%%%%%%%%%%%%%%%%%%%%%%%%%%%%%%%%%%%%
\section{Introduction }   \label{sec:intro}

In this paper we verify that the open direction of Richard Brauer's 1955 height
zero conjecture (BHZ) holds for blocks of finite quasi-simple groups:

\begin{thmA}   \label{thm:A}
 Let $S$ be a finite quasi-simple group, $\ell$ a prime and $B$ an $\ell$-block
 of $S$. Then $B$ has abelian defect groups if and only if all
 $\chi\in\Irr(B)$ have height zero.
\end{thmA}

The proof of one direction of Brauer's height zero conjecture, that blocks
with abelian defect groups only contain characters of height zero, was
completed in \cite{KM}. Subsequently it was shown by Gabriel Navarro and
Britta Sp\"ath \cite{NS14} that the other direction of (BHZ) can be reduced
to proving the following for all finite quasi-simple groups $S$:
\begin{enumerate}
 \item[(1)] (BHZ) holds for $S$, and
 \item[(2)]  the inductive form of the Alperin--McKay conjecture holds for
  $S/Z(S)$.
\end{enumerate}
Here, we show that the first statement holds. The main case, when $S$ is simple
of Lie type, is treated in Section~\ref{sec:height0-Lie}, and then the proof
of the Main Theorem is completed in Section~\ref{sec:height0}.

%%%%%%%%%%%%%%%%%%%%%%%%%%%%%%%%%%%%%%%%%%%%%%%%%%%%%%%%%%%%%%%%%%%%%%%%%
\section{Brauer's height zero conjecture for groups of Lie type}  \label{sec:height0-Lie}

In this section we show that (BHZ) holds for quasi-simple groups of Lie type.
This constitutes the central part of the proof of our Main Theorem.

Throughout, we work with the following setting. We let $\bG$ be a connected
reductive linear algebraic group over an algebraic closure of a finite field
of characteristic~$p$, and $F:\bG\rightarrow\bG$ a Steinberg endomorphism
with finite group of fixed points $\bG^F$. It is well-known that apart
from finitely many exceptions, all finite  quasi-simple groups of Lie type can
be obtained as $\bG^F/Z$ for some central subgroup $Z\le \bG^F$ by choosing
$\bG$ simple of simply connected type.

We let $\bG^*$ be dual to $\bG$, with  compatible Steinberg endomorphism
again denoted $F$. Recall that by the results of Lusztig the set $\Irr(\bG^F)$
of complex irreducible characters of $\bG^F$ is a disjoint union of rational
Lusztig series $\cE(\bG^F,s)$, where $s$ runs over the semisimple elements
of $\bG^{*F}$ up to conjugation.

%%%%%%%%%%%%%%%%%%%%%%%%%%%%%%%%
\subsection{Groups of Lie type in their defining characteristic}
We first consider the easier case of groups of Lie type in their defining
characteristic, where we need the following:

\begin{lem}   \label{lem:root}
 Let $\bG$ be simple of adjoint type, not of type $A_1$, with Frobenius
 endomorphism $F:\bG\rightarrow\bG$. Then every coset of $[\bG^F,\bG^F]$ in
 $\bG^F$ contains a (semisimple) element centralising a root subgroup of
 $\bG^F$.
\end{lem}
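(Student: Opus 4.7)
The plan is to identify the abelian quotient $\bG^F/[\bG^F,\bG^F]$ explicitly and then realise every coset by a semisimple torus element killed by some root. Since $\bG$ is simple adjoint, there is a central isogeny $\pi:\bG_{\mathrm{sc}}\to\bG$ with kernel $Z(\bG_{\mathrm{sc}})$. Applying Lang--Steinberg to this kernel and using that $\bG_{\mathrm{sc}}^F$ is perfect outside the excluded $A_1$ case, one obtains
\[
  \bG^F/[\bG^F,\bG^F]\cong H^1(F,Z(\bG_{\mathrm{sc}})).
\]
For any $F$-stable maximal torus $\bT$ with preimage $\bT_{\mathrm{sc}}$ in $\bG_{\mathrm{sc}}$, the same sequence applied to $\bT$ gives $\bT^F/\pi(\bT_{\mathrm{sc}}^F)\cong H^1(F,Z(\bG_{\mathrm{sc}}))$, and the natural map $\bT^F/\pi(\bT_{\mathrm{sc}}^F)\to\bG^F/[\bG^F,\bG^F]$ is an isomorphism. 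Hence every coset has a semisimple representative in $\bT^F$, and two such representatives are congruent modulo $[\bG^F,\bG^F]$ exactly when they differ by an element of $\pi(\bT_{\mathrm{sc}}^F)$.

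Given a coset represented by $s\in\bT^F$, I would look for $u\in\pi(\bT_{\mathrm{sc}}^F)$ and an $F$-orbit $\mathcal{O}$ of roots with $\alpha(su)=1$ for $\alpha\in\mathcal{O}$; then $su$ lies in $(\ker\mathcal{O})^F$ and centralises the corresponding root subgroup of $\bG^F$. Since $s\in\bT^F$, the value $\alpha(s)$ is constant along the $F$-orbit of $\alpha$, so the task reduces to finding an orbit $\mathcal{O}$ for which $\alpha(s)^{-1}\in\alpha(\pi(\bT_{\mathrm{sc}}^F))$. As $\bT_{\mathrm{sc}}$ is generated by its coroot images $\beta^\vee(\mathbb{G}_m)$ and $\alpha\circ\beta^\vee$ is the $\langle\alpha,\beta^\vee\rangle$-power map on $\mathbb{G}_m$, the image $\alpha(\pi(\bT_{\mathrm{sc}}^F))\le\FF_q^\times$ consists of the $d$-th powers with $d=\gcd_\beta\langle\alpha,\beta^\vee\rangle$. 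It therefore suffices to exhibit a single root $\alpha$ for which $d=1$.

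This Cartan-integer condition is exactly what fails for $A_1$ and is easy to verify in every other irreducible type: in simply-laced diagrams any simple root $\alpha$ has a neighbour $\beta$ in the (connected, $\ge 2$-node) Dynkin diagram giving $\langle\alpha,\beta^\vee\rangle=-1$, and in non-simply-laced diagrams a short simple root adjacent to a long one again gives $-1$. I expect the main obstacle will be carrying this argument through uniformly for the twisted Steinberg endomorphisms, where single roots and coroots have to be replaced by their $F$-orbits and norms into $\FF_{q^{|\mathcal{O}|}}^\times$ take the role of $\FF_q^\times$; in particular the Suzuki and Ree families $\tw{2}B_2(q)$, $\tw{2}G_2(q)$, $\tw{2}F_4(q)$ and $\tw{3}D_4(q)$ have small relative root systems and need separate inspection, but they are either perfect (making the lemma trivial) or admit direct case-by-case verification.
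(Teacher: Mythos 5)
Your overall strategy is genuinely the same as the paper's in disguise: the surjectivity of $\bT^F/\pi(\bT_{\mathrm{sc}}^F)\to\bG^F/[\bG^F,\bG^F]$ is exactly the fact $\bG^F=[\bG^F,\bG^F]\bT^F$ that the paper quotes from \cite[Ex.~30.13]{MT}, and the rest in both cases amounts to finding a root $\alpha$ such that $\alpha(t)$, for $t\in\bT^F$, can be cancelled by an element of $\bT^F\cap[\bG^F,\bG^F]$. The difference lies in how this cancellation is exhibited, and that is where your argument currently has a gap.

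Your key computational claim -- that $\alpha(\pi(\bT_{\mathrm{sc}}^F))$ is the subgroup of $d$-th powers with $d=\gcd_\beta\langle\alpha,\beta^\vee\rangle$ -- is established only when $\bT_{\mathrm{sc}}^F$ decomposes as $\prod_i\alpha_i^\vee(\FF_q^\times)$, i.e.\ in the split case. For the twisted forms $\tw2A_n$, $\tw2D_n$, $\tw2E_6$ (and $\tw3D_4$) the structure of $\bT_{\mathrm{sc}}^F$, and therefore of $\alpha(\pi(\bT_{\mathrm{sc}}^F))$, is governed jointly by the Cartan integers \emph{and} the action of $F$ on the coroot lattice, and the naive gcd formula simply fails: already for $\PGU_3(q)$ with $\alpha=e_1-e_2$ (orbit of length~$2$), the image is $\{a^{2-q}:a\in\FF_{q^2}^\times\}$, whose index in $\FF_{q^2}^\times$ is $\gcd(q-2,q^2-1)$ and so depends on $q\bmod 3$, even though $d=1$. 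You would instead need to pick an $F$-fixed root (here the long root $e_1-e_3$) and argue with the norm map -- which you gesture at, but never pin down. Moreover, your list of cases ``needing separate inspection'' is misdirected: the Suzuki and Ree endomorphisms are \emph{not} Frobenius endomorphisms and so are excluded by the hypotheses of the lemma, while $\tw3D_4$ has $H^1(F,Z(\bG_{\mathrm{sc}}))=0$ so the lemma is vacuous there; the cases that actually need the extra care are the twisted classical and $\tw2E_6$ types, which you do not single out. You should also note that $\bG_{\mathrm{sc}}^F$ fails to be perfect for a handful of small $q$ beyond type $A_1$ (e.g.\ $\SU_3(2)$, $\Sp_4(2)$, $G_2(2)$), so the identification $\bG^F/[\bG^F,\bG^F]\cong H^1(F,Z(\bG_{\mathrm{sc}}))$ has exceptions which must be handled directly.

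The paper avoids all of this by not computing $\alpha(\pi(\bT_{\mathrm{sc}}^F))$ at all: it observes that $\bG$ (not of type $A_1$) has an $F$-stable Levi $\bH$ with $[\bH,\bH]$ of type $A_2$, $B_2$ or $A_3$, and that for the resulting groups $\PSL_3(q)$, $\PSU_3(q)$, $\PSp_4(q)$, $\PSU_4(q)$ one can simply \emph{inspect} that there is a root subgroup $U\cong\FF_q^+$ on which the maximally split torus of $[\bH^F,\bH^F]$ acts transitively. This handles split and twisted forms and all $q$ in one stroke, and it is the step you would have to supply to close the gap.
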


\begin{proof}
First note that by inspection any of the rank~2 groups $\PSL_3(q)$,
$\PSU_3(q)$, and $\PSp_4(q)$ (and hence also $\PSU_4(q)$) contains a root
subgroup $U\cong\FF_q^+$ all of
whose non-identity elements are conjugate under a maximally split torus. Now
if $\bG$ is not of type $A_1$ with $[\bG^F,\bG^F]<\bG^F$ then it contains an
$F$-stable Levi subgroup $\bH$ of type $A_2$, $B_2$, or $A_3$,
and thus $\bG^F$ contains a root subgroup $U$ all of whose
non-trivial elements are conjugate under the maximally split torus of
$[\bH^F,\bH^F]\le[\bG^F,\bG^F]$. But $\bG^F=[\bG^F,\bG^F]\bT^F$ for any
$F$-stable maximal torus $\bT$ of $\bG$ (see \cite[Ex.~30.13]{MT}). Thus any
coset of $[\bG^F,\bG^F]$ in $\bG^F$ contains semisimple elements which
centralise $U$.
\end{proof}

\begin{prop}   \label{prop:defchar} 
 Let $\bG$ be simple, simply connected, not of type $A_1$, and $Z\le \bG^F$ be
 a central subgroup such that $S=\bG^F/Z$ is quasi-simple of Lie type in
 characteristic~$p$. Then any $p$-block of $S$ of positive defect contains
 characters of positive height.
\end{prop}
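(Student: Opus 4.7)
The plan is to exploit the transparent block structure of $\bG^F$ in defining characteristic: for $\bG$ simply connected, every $p$-block of $\bG^F$ has either defect zero or full defect, and, outside a short list of small exceptions where $\bG^F$ fails to be perfect, the only defect-zero block is the Steinberg block. The blocks of full defect are separated by the central character $\omega\in\Irr(Z(\bG^F))$ they induce. A $p$-block of $S=\bG^F/Z$ of positive defect therefore lifts to a full-defect block $B_\omega$ of $\bG^F$ with $\omega|_Z=1$, and it suffices to produce a character of positive height in each such $B_\omega$.

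For the principal block ($\omega=1$), I would use the non-trivial unipotent characters: they all have trivial central character, so lie in $B_1$, and since $\bG$ is not of type $A_1$ at least one of them has strictly positive $a$-invariant $a$, with generic degree $q^{a}D(q)$ for a product $D(q)$ of cyclotomic polynomials. Each cyclotomic factor is coprime to $p$ at $q=p^f$ (since $\Phi_n(0)=\pm 1$), so the $p$-part of the degree is $p^{fa}>1$, producing a positive-height character in $B_1$ that descends to $S$.

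The non-principal case ($\omega\ne 1$) is where Lemma~\ref{lem:root} enters. I pass to the dual: since $\bG$ is simply connected, $\bG^*$ is simple adjoint (still not of type $A_1$), and duality identifies $\Irr(Z(\bG^F))$ with $\bG^{*F}/[\bG^{*F},\bG^{*F}]$, so $\omega$ corresponds to a non-trivial coset there. Lemma~\ref{lem:root} applied in $\bG^*$ yields a semisimple $s\in\bG^{*F}$ in this coset centralising a root subgroup, whence $|C_{\bG^*}(s)^F|_p\ge q$. The regular character $\rho_s\in\cE(\bG^F,s)$ then has degree $[\bG^F:C_{\bG^*}(s)^F]_{p'}\cdot|C_{\bG^*}(s)^F|_p$ by Jordan decomposition (it is the Jordan correspondent of the Steinberg character of $C_{\bG^*}(s)^F$), so its $p$-part is at least $q>1$ and $\rho_s$ has positive height in $B_\omega$; since $\omega|_Z=1$, it descends to $S$.

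The main technical obstacle I anticipate is verifying the Jordan-decomposition degree formula for $\rho_s$ when the centralisers in $\bG^*$ may be disconnected, as happens for simply connected $\bG$: one appeals to Lusztig's extended Jordan decomposition, or passes to a regular embedding in which the centre is connected and then descends. A brief case-by-case check also disposes of those few small quasi-simple groups for which $\bG^F$ is not perfect and might therefore carry additional defect-zero blocks beyond the Steinberg block.
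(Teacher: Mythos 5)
Your proof follows essentially the same route as the paper: Humphreys' classification of $p$-blocks of $\bG^F$, a unipotent character of positive height for the principal block, and Lemma~\ref{lem:root} together with the regular character of $\cE(\bG^F,s)$ (the Jordan correspondent of the Steinberg character of $C_{\bG^{*F}}(s)$) for the non-principal blocks. One small caveat in your principal-block argument: unipotent generic degrees actually have the form $\frac{1}{d_\chi}q^{a}D(q)$ with a rational denominator $d_\chi$ that can be divisible by $p$ in bad families, so the $p$-part is $p^{fa}/|d_\chi|_p$ rather than $p^{fa}$ on the nose; this is why the paper invokes \cite[Thm.~6.8]{MaH} and singles out $\PSp_4(2)$ for a direct check, and you would need a similar safeguard for small $q$.
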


\begin{proof}
By assumption, $S/Z(S)\not\cong\PSL_2(q)$. By the result of Humphreys
\cite{Hum}, the $p$-blocks of $\bG^F$ of positive defect are in bijection with
$\Irr(Z(\bG^F))$ and are of full defect. The principal block of $\bG^F$
contains all the unipotent characters of $\bG^F$, hence a character of
positive height e.g.~by \cite[Thm.~6.8]{MaH} (except when $S=\PSp_4(2)=\fS_6$
where the statement can be checked directly).
\par
Now assume that $Z(\bG^F)\ne1$, and $B$ is the $p$-block of $\bG^F$ lying over
the non-trivial character $\la\in\Irr(Z(\bG^F))$. By the work of Lusztig
\cite{Lu88} there is a natural isomorphism
$\Irr(Z(\bG^F))\cong \bG^{*F}/[\bG^{*F},\bG^{*F}]$ such that for any $s$ in the
coset corresponding to $\la$ all characters of $\cE(\bG^F,s)$ lie over $\la$,
hence in $B$.
Now by Lemma~\ref{lem:root} this coset contains a semisimple element $s_\la$
centralising a root subgroup of $\bG^{*F}$. Then $C_{\bG^{*F}}(s_\la)$
contains a root subgroup, hence has semisimple rank at least~1. By Lusztig's
Jordan decomposition of characters, the regular character in $\cE(\bG^F,s_\la)$
corresponds to the Steinberg character of $C_{\bG^{*F}}(s_\la)$, so has
positive $p$-height, and it lies in $B$.
\end{proof}

%%%%%%%%%%%%%%%%%%%%%%%%%%%%%%%%
\subsection{Unipotent pairs and $e$-cuspidality}
We now turn to the investigation of $\ell$-blocks for primes $\ell\ne p$, which
is considerably more involved.
For the rest of this section we assume that $F: \bG\to\bG$ is a Frobenius
morphism with respect to some $\FF_q$-structure on $\bG$. Let $\ell$ be a prime
not dividing $q$ and let $e=e_\ell(q)$, where $e_\ell(q)$ is the order of
$q$ modulo $\ell$ if $\ell$ is odd and is the order of $q$ modulo $4$ if
$\ell =2$.

By a unipotent pair for $\bG^F$ we mean a pair $(\bL,\la)$, where $\bL$ is an
$F$-stable Levi subgroup of $\bG$ and $\la\in\cE(\bL^F,1)$. If $\bL$ is
$d$-split in $\bG$, then $(\bL,\la)$ is  said to be a unipotent $d$-pair and
if in addition $\la$ is a  unipotent $d$-cuspidal character of $\bL^F$, then
$(\bL, \la)$ is said to be a unipotent $d$-cuspidal pair.

Recall that if $\bL$ is an $F$-stable Levi subgroup of $\bG $, then
$\bar \bL := \bL/Z(\bG)$ is an $F$-stable Levi subgroup of $\bG/Z(\bG)$ and
$ \bL_0 := \bL\cap [\bG, \bG]$ is an $F$-stable Levi subgroup of $[\bG, \bG]$;
the maps $ \bL\mapsto\bar\bL$  and $\bL\mapsto\bL_0$ give bijections between
the sets of $F$-stable Levi subgroups of $\bG$ and of $\bG/Z(\bG)$ and between
the sets of $F$-stable Levi subgroups of $\bG$ and of $[\bG,\bG]$. Also
recall that the natural maps $\bL\to\bL/Z(\bG)$ and $\bL\cap[\bG,\bG]\to\bL$
induce  degree preserving bijections between $\cE(\bL^F,1)$, $\cE(\bar\bL^F,1)$
and $\cE(\bL_0^F,1)$. Hence there are natural bijections between the sets of
unipotent pairs of $\bG^F $, $(\bG/Z(\bG))^F$ and of $[\bG,\bG]^F$ and these
preserve the  properties of being $d$-split and of being $d$-cuspidal
(see \cite[Sec.~3]{CE94}).

\begin{lem}   \label{lem:redweyl}
 Let $(\bL,\la)$, $(\bL_0,\la_0)$ and $(\bar\bL, \bar\la)$ be corresponding
 unipotent pairs for $\bG^F$, $[\bG, \bG]^F$ and $(\bG/Z(\bG))^F$. Then,
 $$W_{[\bG,\bG]^F}(\bL_0, \la_0)\cong W_{\bG^F}(\bL, \la) \cong
    W_{(\bG/Z(\bG))^F}(\bar \bL, \bar \la).$$
\end{lem}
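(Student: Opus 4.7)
The plan is to identify the three Weyl groups $W_\bG(\bL)$, $W_{[\bG,\bG]}(\bL_0)$, and $W_{\bG/Z(\bG)}(\bar\bL)$ canonically as a common abstract group carrying the same $F$-action, and then to verify that under this identification the stabilizers of $\la$, $\la_0$, and $\bar\la$ coincide.

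I would first establish these isomorphisms at the level of algebraic groups. For $W_\bG(\bL) \cong W_{[\bG,\bG]}(\bL_0)$, one uses the structural fact $\bG = Z(\bG)^\circ\cdot[\bG,\bG]$ (with finite intersection) together with $Z(\bG)^\circ\le\bL$ to deduce that $N_\bG(\bL) = Z(\bG)^\circ\cdot N_{[\bG,\bG]}(\bL_0)$; since $\bL = Z(\bG)^\circ\cdot\bL_0$ and $N_{[\bG,\bG]}(\bL_0)\cap\bL = \bL_0$, a short diagram chase yields the desired isomorphism. For $W_\bG(\bL) \cong W_{\bG/Z(\bG)}(\bar\bL)$, the projection $\pi\colon\bG\to\bG/Z(\bG)$ has kernel $Z(\bG)\le\bL$, so the preimage of $\bar\bL$ is exactly $\bL$, whence $\pi$ induces the stated isomorphism. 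Both isomorphisms visibly commute with $F$.

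Next I would pass to $F$-fixed points via the Lang--Steinberg theorem: since $\bL$, $\bL_0$, and $\bar\bL$ are all connected, one has $N_{\bG^F}(\bL)/\bL^F \cong (N_\bG(\bL)/\bL)^F$ and the analogous identities in the other two settings. Combined with the previous step, this yields canonical isomorphisms
$$W_{\bG^F}(\bL)\cong W_{[\bG,\bG]^F}(\bL_0) \cong W_{(\bG/Z(\bG))^F}(\bar\bL)$$
of the relative Weyl groups \emph{without} the character condition.

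Finally, I would show that the degree-preserving bijections between the three unipotent series (recalled just above the lemma, and given by the natural maps $\bL_0\hookrightarrow\bL\twoheadrightarrow\bar\bL$) are equivariant for these identified Weyl group actions. This equivariance follows because the inclusion $\bL_0\hookrightarrow\bL$ and the projection $\bL\twoheadrightarrow\bar\bL$ both commute with conjugation by any element of $N_\bG(\bL)$; by naturality of Lusztig's parametrization the induced bijections on unipotent series then intertwine the corresponding Weyl group actions. Hence the stabilizers of $\la$, $\la_0$, and $\bar\la$ are identified by the isomorphisms of the previous paragraph, yielding the claim. The main obstacle is this last equivariance statement: while intuitively clear from naturality, a careful argument is best made by invoking the framework of unipotent pairs under central isogenies already set up in \cite[Sec.~3]{CE94}.
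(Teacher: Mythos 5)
Your proposal is correct and reaches the same conclusion, but it is organized rather differently from the paper's argument, so a comparison is worthwhile. You first prove the isomorphism of relative Weyl groups \emph{without} the character condition: once at the level of algebraic groups (via the decompositions $\bG=Z^\circ(\bG)[\bG,\bG]$, $\bL=Z^\circ(\bG)\bL_0$, and the fact that $\pi^{-1}(\bar\bL)=\bL$), and then by passing to $F$-fixed points through the exact sequence $1\to\bL\to N_\bG(\bL)\to N_\bG(\bL)/\bL\to1$ and Lang--Steinberg. Only afterwards do you bring in $\la$, $\la_0$, $\bar\la$ and argue that restriction/inflation are equivariant for the identified actions. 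The paper instead argues directly at the level of $F$-fixed groups: it shows the induced map $W_{\bG^F}(\bL,\la)\to W_{\bar\bG^F}(\bar\bL,\bar\la)$ is injective because $Z(\bG)^F\le\bL^F$, and surjective by lifting $\bar x\in N_{\bar\bG^F}(\bar\bL,\bar\la)$ to $x\in\bG$ and multiplying by a Lang--Steinberg correction $t\in\bT\le\bL$ so that $xt\in\bG^F$; the $[\bG,\bG]$ case is then declared ``similar.'' Both arguments hinge on the same two inputs: Lang--Steinberg on connected subgroups of $\bL$, and the fact that conjugation by a Lang correction $t$ (with $t^{-1}F(t)$ central) acts as a diagonal-type automorphism of $\bL^F$ or $\bL_0^F$ that fixes unipotent characters --- the paper compresses this into the phrase ``$N_\bT(\bL^F)$ stabilises $\la$,'' and you compress it into ``naturality of Lusztig's parametrization.'' You are right to single out this last equivariance step as the one needing care: for the $\bar\bL$ direction it is genuinely elementary (inflation commutes with conjugation by $F$-fixed elements of $\bG$), but for the $\bL_0$ direction the fixed-point representative $n_0\in N_{[\bG,\bG]^F}(\bL_0)$ chosen via Lang--Steinberg differs from the $[\bG,\bG]$-component of $n$ by an element of $\bL_0$, and closing this gap does require the stability of unipotent characters of $\bL_0^F$ under such diagonal twists, as you indicate by pointing to \cite[Sec.~3]{CE94}. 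Your layered organization makes the two roles (Weyl group isomorphism vs.\ stabilizer matching) more transparent, at the modest cost of an extra reduction step; the paper's version is shorter but states the key equivariance facts without elaboration.
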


\begin{proof}
Let $\bar \bG = \bG/Z(\bG)$. The canonical map $\bG \to\bar\bG $ induces an
injective map from $W_{\bG^F}( \bL, \la)$ into $W_{\bar\bG^F}(\bar\bL,\bar\la)$.
Conversely, let $x \in \bG $ be such that its image $\bar x\in\bar\bG$ is in
$N_{\bar\bG^F}(\bar\bL,\bar\la)$. Then $x$ normalises $\bL$ as well as
$\bL^F$ and stablises $\la$. Further, by the Lang--Steinberg theorem,
$xt\in\bG^F$ for some $t$ lying in an $F$-stable maximal torus $\bT$ of $\bL$.
Since $N_\bT(\bL^F)$ stabilises $\la$, we have that $xt\in N_{\bG^F}(\bL,\la)$.
Further, since $\bar x\in\bar\bG^F$, $\bar t\in\bar\bL^F$, and hence
$xt\bL^F\mapsto\bar x\bar\bL^F$ under the inclusion of
$W_{\bG^F}(\bL,\la)$ in $W_{\bar\bG^F}(\bar \bL,\bar\la)$. The proof for
the isomorphism
$$ W_{[\bG,\bG]^F}(\bL_0, \la_0) \cong W_{\bG^F}(\bL, \la)$$
is similar.
\end{proof}

Next, we note the following consequence of \cite[Prop.~1.3]{CE94}.

\begin{lem}   \label{lem:redecusp}
 Suppose that $\bG=[\bG, \bG]$ is simply connected. Let $\bG_1,\ldots,\bG_r$
 be a set of representatives for the $F$-orbits on the set of simple
 components of $\bG $ and for each $i$ let $d_i$ denote the length of the
 $F$-orbit of $\bG_i$. For a Levi subgroup $\bL$ of $\bG$, let
 $\bL_i =\bL \cap \bG_i$.
 Then $\bL $ is $F$-stable if and only if $\bL_i $ is $F^{d_i} $-stable
 for all $i$ and in this case
 $$\bL = (\bL_1F(\bL_1)\cdots F^{d_1-1}(\bL_1))\cdots(\bL_r F(\bL_r)\cdots
    F^{d_r-1} (\bL_r)). $$
 Further, projecting onto the $\bG_i$ component in each $F$-orbit induces an
 isomorphism
 $$\bL^F \cong \bL_1^{F^{d_1}}\times \cdots \times \bL_r^{F^{d_r}}.$$
 If, under the above isomorphism, $\la\in\cE(\bL^F,1)$ corresponds to
 $\la_1\times \cdots \times \la_r$, with $\la_i\in\cE(\bL^{F^{d_i}},1)$,
 then $(\bL,\la)$ is an $e$-cuspidal pair for $\bG^F$ if and only if
 $(\bL_i^{F^{d_i}},\la_i)$ is  an $e_\ell(q^{d_i})$-cuspidal pair for
 $\bG_i^{F^{d_i}}$ for each $i$.
\end{lem}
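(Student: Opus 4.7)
The plan is to reduce the assertions to the standard product decomposition of a simply connected group into its simple components, and then invoke \cite[Prop.~1.3]{CE94} for the $e$-cuspidality statement.

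First, since $\bG=[\bG,\bG]$ is simply connected, $\bG$ is the internal direct product of its simple components, which $F$ permutes. Grouping by $F$-orbits gives
$$\bG=\prod_{i=1}^r\bigl(\bG_i F(\bG_i)\cdots F^{d_i-1}(\bG_i)\bigr).$$
Any Levi subgroup $\bL$ of $\bG$ splits along the simple components as $\bL=\prod_{i,j}L_{i,j}$ with $L_{i,j}:=\bL\cap F^j(\bG_i)$ a Levi of $F^j(\bG_i)$. The $F$-stability of $\bL$ is equivalent to $F(L_{i,j})=L_{i,j+1}$ with indices taken modulo $d_i$, hence to $\bL_i=L_{i,0}$ being $F^{d_i}$-stable together with $L_{i,j}=F^j(\bL_i)$ for $1\le j<d_i$. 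This simultaneously yields the first equivalence in the lemma and the product formula for $\bL$.

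For the fixed-point isomorphism, the projection onto the first factor of each orbit restricts to a bijection
$$\bigl(\bL_i F(\bL_i)\cdots F^{d_i-1}(\bL_i)\bigr)^F\cong\bL_i^{F^{d_i}}$$
with inverse $x_i\mapsto(x_i,F(x_i),\ldots,F^{d_i-1}(x_i))$. Taking the product over $i$ gives $\bL^F\cong\prod_i\bL_i^{F^{d_i}}$. Since the unipotent characters of a direct product are precisely the external tensor products of unipotent characters of the factors, any $\la\in\cE(\bL^F,1)$ corresponds under this identification to a tuple $\la_1\times\cdots\times\la_r$ with $\la_i\in\cE(\bL_i^{F^{d_i}},1)$.

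The final assertion then reduces to \cite[Prop.~1.3]{CE94}, which asserts that $d$-split Levis of a direct product are products of $d$-split Levis and that $d$-cuspidality, detected by the vanishing of all proper Lusztig restrictions, respects the factorisation. The one point requiring attention is the translation of the parameter: although $e=e_\ell(q)$ governs $e$-splitness and $e$-cuspidality for $\bG^F$, on each orbit the underlying Frobenius of the simple factor $\bG_i$ is really $F^{d_i}$, acting as $q^{d_i}$ on cocharacter lattices, so the corresponding parameter for $\bG_i^{F^{d_i}}$ becomes $e_\ell(q^{d_i})$. I expect this reindexing of the order-modulo-$\ell$ parameter, rather than any of the structural decompositions, to be the main technical point to check, while everything else is essentially bookkeeping once the product decomposition of $\bG$ is fixed.
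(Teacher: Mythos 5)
Your proof is correct and follows essentially the same route as the paper: the paper gives no proof, merely stating the lemma as a consequence of \cite[Prop.~1.3]{CE94}, and your argument fills in exactly the expected bookkeeping (direct-product decomposition of the simply connected group, splitting of Levis and unipotent characters along factors, fixed-point isomorphism via the diagonal embedding) before invoking that same reference for the $e$-cuspidality statement, correctly flagging the $e_\ell(q)\mapsto e_\ell(q^{d_i})$ reindexing as the substantive point.
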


\begin{lem}   \label{lem:owncentraliser}
 Suppose that either $\ell$ is odd or that $\bG$ has no components of
 classical type $A,B,C$, or $D$. Let $(\bL,\la)$ be a unipotent
 $e$-cuspidal pair of $\bG^F$. Then, $\bL =C_\bG^\circ(Z(\bL)^F_\ell)$.
\end{lem}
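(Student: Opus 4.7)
The plan is to use Lemmas~\ref{lem:redweyl} and~\ref{lem:redecusp} to reduce to the case where $\bG$ is simple and simply connected, then exploit that an $e$-cuspidal pair has an $e$-split Levi, and finally to show that $Z(\bL)^F_\ell$ is dense enough in the $\Phi_e$-subtorus of $Z^\circ(\bL)$ to have $\bL$ as its connected centralizer in $\bG$.

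For the reduction, note that since $Z(\bG)$ is $F$-stable, central in $\bG$, and contained in $Z(\bL)$, one has $C_\bG^\circ(X)=C_{[\bG,\bG]}^\circ(X)\cdot Z^\circ(\bG)$ for any $X\le\bL$, and $Z(\bL)=Z(\bL_0)\,Z(\bG)$. Hence $\bL=C_\bG^\circ(Z(\bL)^F_\ell)$ if and only if $\bL_0=C_{[\bG,\bG]}^\circ(Z(\bL_0)^F_\ell)$. Applying Lemma~\ref{lem:redecusp} to $[\bG,\bG]$, both sides of this equality are compatible with the direct product decomposition over $F$-orbits of simple components, so the assertion reduces to one about each factor. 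We may therefore assume that $\bG$ is simple and simply connected, with $(\bL,\la)$ an $e_\ell(q^d)$-cuspidal unipotent pair in $\bG^{F^d}$; renaming $F^d$ as $F$, we are back in the setup of the lemma.

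In this reduced setting, a unipotent $e$-cuspidal pair $(\bL,\la)$ has $\bL$ an $e$-split Levi, so
$$\bL = C_\bG(\mathbf{S}),$$
where $\mathbf{S}$ is the maximal $\Phi_e$-subtorus of $Z^\circ(\bL)$. This immediately gives $\bL\le C_\bG^\circ(Z(\bL)^F_\ell)$, and for the reverse inclusion it is enough to show that $Z(\bL)^F_\ell$ is not contained in the kernel of any root of $\bG$ (relative to a maximal torus through $\mathbf{S}$) that does not already lie in $\bL$. Equivalently, one needs that the Sylow $\ell$-subgroup of $Z(\bL)^F$ lies in $\mathbf{S}^F$ and meets $\mathbf{S}$ in ``generic'' position: since $e=e_\ell(q)$, $\Phi_e(q)$ divides $|\mathbf{S}^F|$ to the full multiplicity $\dim\mathbf{S}$, which by elementary counting in a cyclotomic torus forces the Sylow $\ell$-subgroup of $\mathbf{S}^F$ to have $\bL$ as its connected centralizer. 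This is essentially \cite[Prop.~1.3]{CE94}, applied component by component via Lemma~\ref{lem:redecusp}.

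The main obstacle is precisely this last point when $\ell=2$: in classical types~$A$, $B$, $C$, $D$ the component group $Z(\bL)/Z^\circ(\bL)$ can contribute $2$-torsion outside $\mathbf{S}^F$, and the non-$\Phi_e$ factors of $|Z^\circ(\bL)^F|$ can carry an additional $2$-part, so $Z(\bL)^F_2$ may a priori acquire elements whose connected centralizer strictly exceeds $\bL$. The hypotheses of the lemma ($\ell$ odd, or $\bG$ without classical components) are exactly designed to rule this out; once the $\ell$-part of $Z(\bL)^F$ is confined to $\mathbf{S}^F$, the density argument in a torus concludes the proof.
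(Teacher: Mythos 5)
Your proposal has the right overall shape (reduce to simple simply connected, then use the $e$-split structure of $\bL$), but two steps are genuinely missing. First, you never reduce from semisimple to simply connected. You pass from $\bG$ to $[\bG,\bG]$ and then invoke Lemma~\ref{lem:redecusp} to decompose into simple factors, but Lemma~\ref{lem:redecusp} explicitly assumes $\bG=[\bG,\bG]$ is \emph{simply connected}: for a general semisimple group the simple components need not form a direct product. The paper inserts a separate reduction here, via an $F$-compatible simply connected covering $\hat\bG\to\bG$ with finite central kernel and a delicate comparison of the centralizers upstairs and downstairs. (Incidentally, your claimed ``if and only if'' in the reduction to $[\bG,\bG]$ is stronger than needed and not obviously true; the paper only proves the one implication it uses, via $Z(\bL_0)^F_\ell\le Z(\bL)^F_\ell$.)

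Second, and more seriously, the concluding ``density argument in a cyclotomic torus'' does not finish the lemma. The paper's proof in the simple simply connected case is by citation: \cite[Prop.~3.3(ii)]{CE94} when $\ell$ is odd and good for $\bG$, and Enguehard's case-by-case analysis in \cite{En00} when $\bG$ is exceptional and $\ell$ is bad. Your citation of \cite[Prop.~1.3]{CE94} is misplaced --- that proposition is the source of the direct-product decomposition of Lemma~\ref{lem:redecusp}, not of the centralizer equality. The genericity you sketch (that $\Phi_e(q)$ dividing $|\mathbf{S}^F|$ to full multiplicity forces $C_\bG^\circ(\mathbf{S}^F_\ell)=\bL$) is essentially the content of \cite[Prop.~3.3]{CE94}, and that result requires $\ell$ odd and good. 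For $\bG$ exceptional with $\ell\in\{2,3,5\}$ bad there is no such uniform argument: one must go through the finite list of unipotent $e$-cuspidal pairs in \cite{En00} and verify the equality pair by pair. You correctly observe that the hypotheses of the lemma rule out $2$-torsion problems in $Z(\bL)/Z^\circ(\bL)$ for classical types, but you overlook that the bad-prime exceptional cases remain and need this separate, non-generic verification.
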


\begin{proof}
We claim that it suffices to prove the result in the case that $\bG $ is
semisimple. Indeed, let $\bG_0 =[\bG, \bG]$, $\bL_0 =\bL \cap\bG_0$ and
$\la_0$ be the restriction of $\la$ to $\bL_0 ^F$. Then, $(\bL_0,\la_0)$
is a unipotent $e$-cuspidal pair of $\bG_0^F$. Suppose that
$\bL_0 =C_{\bG_0}^\circ (Z(\bL_0)^F_\ell)$. Since $\bG = Z^\circ(\bG)\bG_0 $,
we have that
$$C_\bG(Z(\bL_0)^F_\ell) =Z^\circ(\bG) C_{\bG_0}(Z(\bL_0)^F_\ell),$$
hence
$$ C_\bG ^\circ( Z(\bL_0)^F_\ell) =Z^\circ(\bG)C_{\bG_0}^\circ
  (Z(\bL_0)^F_\ell) = Z^\circ (\bG)\bL_0 = \bL.$$
Here the first equality holds since
$C_\bG ( Z(\bL_0)^F_\ell) /Z^\circ(\bG) C_{\bG_0}^\circ(Z(\bL_0)^F_\ell)$
is a surjective image of $C_{\bG_0}(Z(\bL_0)^F_\ell)/
C_{\bG_0}^\circ(Z(\bL_0)^F_\ell)$ and hence is finite.
On the other hand, we have that $Z(\bL_0)^F_\ell \leq Z(\bL)^F_\ell$
whence $C_\bG (Z(\bL)^F_\ell)\leq C_\bG(Z(\bL_0)^F_\ell)$
and the claim follows.

We assume from now on that $\bG =[\bG,\bG]$. We claim that it suffices
to prove the result in the case that $\bG$ is simply connected. Indeed, let
$\hat \bG\to\bG$ be an $F$-compatible simply connected covering of $\bG$,
with finite central kernel, say $Z$. Let $\hat\bL$ be the inverse image of
$\bL$ in $\hat\bG$ and let $\hat \la_0\in\Irr(\hat \bL^F)$ be the (unipotent)
inflation of $\la$. By Lemma~\ref{lem:redweyl} $(\hat \bL, \hat \la)$ is an
$e$-cuspidal unipotent pair of $\hat \bL ^F $. Let $\hat A= Z(\hat\bL)^F_\ell$
and suppose that $C_{\hat\bG}^\circ(\hat A)= \hat \bL$.
Let $A =\hat AZ/Z$ and let $\bC$ be the inverse image in $\hat\bG$ of
$C_\bG(A)$. Then $C_{\hat\bG}(\hat A)= C_{\hat \bG}(\hat AZ)$ is a normal
subgroup of $\bC$ and $\bC/C_{\hat \bG}(\hat A)$ is isomorphic to a subgroup of
the automorphism group of $\hat AZ$. Since $\hat AZ$ is finite, it follows that
$\bC/C_{\hat \bG}(\hat A)$ is finite and hence $C_{\hat\bG}(\hat A)/Z$ has
finite index in $\bC/Z = C_{\bG}(\hat A)$. On the other hand,
$\hat A\leq Z(\bL)^F_\ell$, hence $C_{\hat\bG}(\hat A)/Z$ has finite index in
$C_\bG (Z(\bL)^F_\ell)$. So,
$$ C_\bG^\circ(Z(\bL)^F_\ell) \leq (C_{\hat\bG}(\hat A)/Z)^\circ
  = C_{\hat\bG}^\circ(\hat A)/Z =\hat\bL/Z = \bL$$
which proves the claim.

Thus, we may assume that $\bG =[\bG, \bG] $ is simply connected. By
\cite[Lemma~7.1]{KM} and Lemma~\ref{lem:redecusp} we may assume that $\bG$ is
simple. If $\ell $ is good for $\bG$ and odd, then the result is contained in
\cite[Prop.~3.3(ii)]{CE94}. If $\bG $ is of exceptional type and $\ell $ is bad
for $\bG $ then the result is proved case by case in \cite{En00}.
\end{proof}

%%%%%%%%%%%%%%%%%%%%%%%%%%%%%%%%
\subsection{On heights of unipotent characters}
We now collect some results on heights of unipotent characters. We first need
the following observation:

\begin{lem}   \label{lem:posheight}
 Let $\ell$ be a prime and $n\ge\ell$.
 \begin{itemize}
  \item[\rm(a)] The symmetric group $\fS_n$ has an irreducible character of
   degree divisible by $\ell$ unless $n=\ell\in\{2,3\}$.
  \item[\rm(b)] The complex reflection group $G(2e,1,n)\cong C_{2e}\wr\fS_n$
   and its normal subgroup $G(2e,2,n)$ of index~2 (with $e>1$ if $n<4$) have
   an irreducible character of degree divisible by $\ell$.
 \end{itemize}
\end{lem}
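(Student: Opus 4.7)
The proof is a calculation with the hook length formula for symmetric groups (for part (a)) together with the analogous product formula for wreath products and Clifford theory (for part (b)).

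For part (a), the degrees of the irreducible characters of $\fS_n$ are given by $\chi^\lambda(1) = n!/\prod_{(i,j)\in\lambda} h(i,j)$ for $\lambda\vdash n$. I would split the argument according to $n \bmod \ell$. If $\ell \nmid n$ (which forces $n \ge \ell + 1$), consider the hook partition $\lambda = (n - \ell + 1, 1^{\ell - 1})$ of degree $\binom{n-1}{\ell - 1}$. The numerator $(n-1)(n-2)\cdots(n-\ell+1)$ is a product of $\ell - 1$ consecutive integers; since $n \not\equiv 0 \pmod \ell$, exactly one of these is divisible by $\ell$, while the denominator $(\ell - 1)!$ is coprime to $\ell$, so $\ell \mid \chi^\lambda(1)$. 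If $\ell \mid n$ and $\ell$ is odd with $n \ge 5$ (in particular including $n = \ell \ge 5$), take $\lambda = (n - 2, 2)$ of degree $n(n-3)/2$, which is divisible by $\ell$ since $\ell \mid n$ and $\gcd(\ell, 2) = 1$. For $\ell = 2$ with $n$ even and $n \ge 4$, use $(n - 2, 2)$ of degree $n(n-3)/2$ when $4 \mid n$, and $(n - 2, 1^2)$ of degree $(n-1)(n-2)/2$ when $n \equiv 2 \pmod 4$; in either case the degree is even. The excluded pairs $n = \ell \in \{2, 3\}$ correspond to $\fS_2$ (degrees $1, 1$) and $\fS_3$ (degrees $1, 1, 2$), whose character tables show no $\ell$-divisible degrees.

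For part (b), irreducible characters of $C_{2e} \wr \fS_n$ are indexed by $2e$-tuples of partitions $\underline{\lambda} = (\lambda^{(0)}, \ldots, \lambda^{(2e - 1)})$ with $\sum_i |\lambda^{(i)}| = n$, and have degree $n!/\prod_i H(\lambda^{(i)})$. For pairs $(n, \ell) \notin \{(2, 2), (3, 3)\}$, take $\underline{\lambda} = (\lambda, \emptyset, \ldots, \emptyset)$ with $\lambda \vdash n$ as in part (a); the corresponding character of $G(2e, 1, n)$ has degree $\chi^\lambda(1)$. For the exceptional pairs, the hypothesis $e > 1$ gives $2e \ge 4 \ge n$, so we may take $\underline{\lambda}$ to have a single box $(1)$ in each of $n$ distinct components, yielding a character of degree $n! \in \{2, 6\}$, divisible by $\ell$. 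For the index-two subgroup $G(2e, 2, n)$, apply Clifford theory: letting $\epsilon$ be the linear character of $G(2e, 1, n)$ with kernel $G(2e, 2, n)$, a character $\chi^{\underline{\lambda}}$ restricts irreducibly to $G(2e, 2, n)$ precisely when $\chi^{\underline{\lambda}} \otimes \epsilon \ne \chi^{\underline{\lambda}}$. Twisting by $\epsilon$ corresponds on the parameter side to the cyclic shift $\lambda^{(i)} \mapsto \lambda^{(i + e)}$ (indices mod $2e$); the tuples chosen above are not fixed by this shift, so the restrictions are irreducible of the same degree and provide the required characters in $G(2e, 2, n)$.

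The main obstacle lies in part (a), in handling the case $\ell \mid n$: the hook-partition construction of the first case fails here, and for $\ell = 2$ the two-row partition $(n-2, 2)$ does not always yield an even degree, forcing the alternative choice $(n-2, 1, 1)$ in the subcase $n \equiv 2 \pmod 4$. Once part (a) is established, part (b) follows from elementary wreath-product character theory combined with a routine index-two Clifford argument.
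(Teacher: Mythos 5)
Your proof is correct. It differs from the paper's in organization and level of detail, and the comparison is instructive. For part (a), the paper reduces via hook theory to the range $\ell\le n\le 2\ell-1$ and then exhibits $\ell$-core partitions there, so the argument for $n\ge 2\ell$ is left implicit; you instead give a uniform construction that works for all $n\ge\ell$ at once, by splitting on $\ell\mid n$ versus $\ell\nmid n$ and using the hook character $\binom{n-1}{\ell-1}$ in the latter case, supplemented by $(n-2,2)$ and $(n-2,1^2)$ when $\ell\mid n$. This is more explicit and avoids having to justify the reduction to a bounded range. (Your careful separation of $4\mid n$ from $n\equiv 2\pmod 4$ for $\ell=2$ is exactly where a naive single choice would fail, so the case split is necessary, not cosmetic.) For part (b), the paper simply observes that $\fS_n$ is a common quotient of $G(2e,1,n)$ and $G(2e,2,n)$ and inflates the character from (a), treating the two exceptional pairs $(n,\ell)\in\{(2,2),(3,3)\}$ as an easy check; you instead work directly with the wreath-product parametrization and a Clifford-theoretic argument for the index-two restriction. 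For the non-exceptional cases your tuple $(\lambda,\emptyset,\dots,\emptyset)$ is in fact the inflation of $\chi^\lambda$ through the quotient map to $\fS_n$, so the two arguments produce the same character; the Clifford analysis then just re-derives that this inflation stays irreducible on $G(2e,2,n)$, which the paper gets for free from the quotient observation. Your treatment is heavier but self-contained; the paper's is shorter but relies on the reader filling in more. One small point to make explicit: when you place single boxes in $n$ distinct components in the exceptional cases, you should fix a concrete choice (e.g.\ components $0,1,\dots,n-1$) and note it is not invariant under the shift by $e$, since an unlucky choice such as $\{0,e\}$ when $n=2$ would be shift-invariant and then the restriction to $G(2e,2,2)$ would have degree $1$.
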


\begin{proof}
(a) By the hook formula for the character degrees of $\fS_n$ it suffices to
produce a partition $\la\vdash n$ with no $\ell$-hook, for
$\ell\le n\le 2\ell-1$. For $\ell\ge5$ the partition $(\ell-2,2)\vdash\ell$
and suitable hook partitions for $\ell<n\le 2\ell-1$ are as claimed. For
$\ell\le3$ the symmetric groups $\fS_m$, $\ell+1\le m\le 2\ell$, have suitable
characters.
\par
For (b) note that both $G(2e,1,n)$ and $G(2e,2,n)$ have $\fS_n$ as a factor
group, so we are done by~(a) unless $n=\ell\in\{2,3\}$. In the latter two
cases the claim is easily checked.
\end{proof}

\begin{lem}   \label{lem:unipsmall}
 Let $(\bL,\la)$ be a unipotent $e$-cuspidal pair of $\bG^F$ of central
 $\ell$-defect, where $e=e_\ell(q)$. Suppose that
 $|W_{\bG^F}(\bL, \la)|_\ell\ne 1$ and all irreducible characters of
 $W_{\bG^F}(\bL,\la)$ are of degree prime to $\ell$.
 Then, $\ell\leq 3$. Suppose in addition that $\bG$ is simple and simply
 connected. Then $W_{\bG^F}(\bL,\la)\cong\fS_\ell$ and the following holds:
 \begin{enumerate}
  \item[\rm(a)] If $\ell=3$, then either $\bG^F= \SL_3(q)$ with $3|(q-1)$ or
   $\SU_3(q)$ with $3|(q+1)$ or $\bG$ is of type $E_6$ and $(\bL,\la)$
   corresponds to Line~8 of the $E_6$-tables of \cite[pp.~351,~354]{En00}.
  \item[\rm(b)] If $\ell=2$, then either $\bG$ is of classical type, or
  $\bG$ is of type $E_7$ and $(\bL,\la)$ corresponds to one of Lines~3 or~7 of
  the $E_7$-table of \cite[p.~354]{En00}.
 \end{enumerate}
\end{lem}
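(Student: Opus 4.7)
My approach is to reduce to $\bG$ simple and simply connected, and then to combine the classification of relative Weyl groups of unipotent $e$-cuspidal pairs with Lemma~\ref{lem:posheight}. By Lemma~\ref{lem:redweyl}, $W_{\bG^F}(\bL, \la)$ is unchanged on passing from $\bG$ to $[\bG, \bG]$ and then to a simply connected cover (as unipotent characters inflate through the isogeny). When $\bG$ is simply connected, Lemma~\ref{lem:redecusp} decomposes $(\bL, \la)$ into $e_\ell(q^{d_i})$-cuspidal pairs on $F$-orbit representatives of the simple components, with $W_{\bG^F}(\bL, \la)$ being the corresponding direct product and central $\ell$-defect inherited factorwise. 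Since $|W|_\ell \neq 1$ iff some factor has order divisible by $\ell$, while ``all irreducible characters of degree prime to $\ell$'' is inherited by each factor, both conclusions reduce to the simple simply connected case.

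For $\bG$ of type $A$, the relative Weyl groups of unipotent $e$-cuspidal pairs are symmetric groups, so Lemma~\ref{lem:posheight}(a) immediately forces $W_{\bG^F}(\bL, \la) \cong \fS_\ell$ with $\ell \in \{2, 3\}$. Combining this with the central $\ell$-defect condition and the explicit parametrisation of unipotent $e$-cuspidal pairs in $\SL_n(q)$ and $\SU_n(q)$ via $e$-cores narrows the admissible pairs to the ones listed in~(a). For $\bG$ of classical type $B_n$, $C_n$ or $D_n$, the relative Weyl groups of unipotent $e$-cuspidal pairs take the form $G(de, 1, a)$ or its index-two subgroup $G(de, d, a)$ with $d \in \{1, 2\}$; Lemma~\ref{lem:posheight}(b) rules out $de$ even with $a \ge 4$ unless all characters have $\ell$-divisible degree fails, while the remaining low-rank cases are verified by direct inspection of the character tables of the associated Weyl groups. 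This yields the classical instances of~(b), with $\ell = 2$.

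For $\bG$ of exceptional type, I would go through the tables of unipotent $e$-cuspidal pairs of central $\ell$-defect compiled in Enguehard~\cite{En00}, which record the relative Weyl group $W_{\bG^F}(\bL, \la)$ row by row. For each row one checks whether $|W|_\ell \neq 1$ and yet every irreducible character of $W$ has degree prime to $\ell$. The only surviving entries are Line~8 of the $E_6$ tables (giving $\ell = 3$ and $W \cong \fS_3$) and Lines~3 and~7 of the $E_7$ table (giving $\ell = 2$ and $W \cong \fS_2$), matching the exceptional cases in~(a) and~(b).

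The hardest part will be the low-rank classical analysis at $\ell = 2$ together with the exceptional bookkeeping. When $\ell = 2$ and the relative Weyl group is $G(2, 1, a)$ or $G(2, 2, a)$ with $a < 4$, Lemma~\ref{lem:posheight}(b) does not directly apply, so one must inspect the character tables of $W(B_a)$ and $W(D_a)$ individually and determine which, if any, can be realised as a relative Weyl group of a central $\ell$-defect unipotent $e$-cuspidal pair satisfying the hypothesis; and for exceptional types one must verify entry by entry that no row of Enguehard's tables, apart from the three listed, yields a relative Weyl group with the required character-degree pattern.
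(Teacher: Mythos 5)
Your overall strategy — reduce to $\bG$ simple simply connected, then run through the types using Lemma~\ref{lem:posheight} for classical groups and tables for exceptional ones — is the same as the paper's. But there is a concrete gap in your type $A$ argument. You assert that the relative Weyl groups of unipotent $e$-cuspidal pairs in type $A$ are symmetric groups, which is not true: they are $C_e\wr\fS_a$ for the linear groups and $C_d\wr\fS_a$ with $d=e_\ell(-q)$ for the unitary groups. The engine of the paper's type-$A$ argument is the observation that $e<\ell$ by the very definition of $e=e_\ell(q)$ (and likewise $d<\ell$), so the cyclic factor can never carry any $\ell$-part of $|W_{\bG^F}(\bL,\la)|$; only then does one deduce $\ell\le a$, and only then does Lemma~\ref{lem:posheight}(a) force $a=\ell\in\{2,3\}$. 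Your sketch skips this, and your fall-back appeal to ``explicit parametrisation via $e$-cores'' does not replace it: without $e<\ell$ you cannot rule out the cyclic factor contributing the $\ell$ in $|W|_\ell$ with no character of $\ell$-divisible degree. One also still needs a word on why $C_e\wr\fS_\ell$ with $e>1$ is excluded (it has characters of degree $\ell$), to land on $W\cong\fS_\ell$.

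Two smaller points. For types $B$, $C$, $D$ your sentence is garbled and the ``direct inspection at low rank'' is unnecessary — Lemma~\ref{lem:posheight}(b) was stated with the conditions $n\ge\ell$ and $e>1$ if $n<4$ precisely so that no separate low-rank computation is needed; you should also flag the type $D$ degenerate-symbol case, where $W_{\bG^F}(\bL,\la)$ may be the full Weyl group of type $D_n$ rather than a wreath product $C_d\wr\fS_a$. For the exceptional types, the paper in fact performs a two-step check: first it reads off $W_{\bG^F}(\bL,\la)$ from \cite[Table~1]{BMM} and identifies the candidate pairs where every character degree is prime to $\ell$, and only then imposes the central-$\ell$-defect condition by consulting the tables of \cite{En00}; your description of a single pass through Enguehard's tables is serviceable but under-describes how the central-$\ell$-defect hypothesis does the final filtering (it is what eliminates the $E_6$ and $E_8$ candidates with $W\cong C_2$).
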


\begin{proof}
The first statement easily reduces to the case that $\bG$ is simple, which we
will assume from now on.
We go through the various cases. First assume that $\bG$ is of exceptional
type, or that $\bG^F=\tw3D_4(q)$. The relative Weyl groups $W_{\bG^F}(\bL,\la)$
of unipotent $e$-cuspidal pairs are listed in \cite[Table~1]{BMM}, and
an easy check shows that they possess characters of degree divisible by $\ell$
whenever $\ell$ divides $|W_{\bG^F}(\bL,\la)|$, unless either $\ell=3$, $\bG$
is of type $E_6$ and we are in case~(a), or $\ell=2$ and
$W_{\bG^F}(\bL,\la)\cong C_2$ in $\bG$ of type $E_6$, $E_7$ or $E_8$. According
to the tables in \cite[pp.~351,~354,~358]{En00}, the only
case with $\lambda$ of central $\ell$-defect is in $E_7$ with $\bL$ of type
$E_6$ and $\la$ one of the two cuspidal characters as in~(b).
\par
Next assume that $\bG^F$ is of type $A$. The relative Weyl groups have the
form $C_e\wr\fS_a$ for some $a\ge1$. By definition, $e<\ell$, so if $\ell$
divides $|W_{\bG^F}(\bL,\la)|$ then $\ell\le a$. Then by
Lemma~\ref{lem:posheight} we arrive at either~(a) or~(b) of the
conclusion. If $\bG^F$ is a unitary group, the same argument applies, except
that here the relative Weyl groups have the form $C_d\wr\fS_a$ with
$d =e_\ell(-q)$. For $\bG$ of type $B$ or $C$, the relative Weyl groups have
the form $C_d\wr\fS_a$, with $d\in\{e,2e\}$ even, and again by
Lemma~\ref{lem:posheight} no exceptions arise. The relative Weyl groups have
the same structure for $\bG$ of type $D$, unless $\bG^F$ is untwisted and
$\la$ is parametrised by a degenerate symbol, and either $e\in\{1,2\}$,
$\la=1$, $W_{\bG^F}(\bL,\la)=W$ and so is of type $D_n$ with $n\ge4$, or
$W_{\bG^F}(\bL,\la)\cong G(2d,2,n)$ with $d\ge2$, so again we are done by
Lemma~\ref{lem:posheight}.
\end{proof}

Recall that by \cite[Thm.~A]{En00} if $(\bL,\la)$ is a unipotent $e$-cuspidal
pair of $\bG$, then all irreducible constituents of $R_\bL^\bG(\la)$ lie in
the same $\ell$-block, say $b_{\bG^F}(\bL,\la)$ of $\bG^F$.

\begin{lem}   \label{lem:non-ab-weyl}
 Let $(\bL,\la)$ be a unipotent $e$-cuspidal pair of $\bG^F$ and let
 $B=b_{\bG^F}(\bL,\la)$. Suppose that $\la$ is of central $\ell$-defect
 and that $\bL =C_\bG^\circ (Z(\bL)^F_\ell)$. If $B$ has non-abelian defect
 groups, then $|W_{\bG^F}(\bL,\la)|$ is divisible by $\ell$.
\end{lem}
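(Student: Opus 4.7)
The plan is to invoke the explicit description of defect groups of $e$-cuspidal unipotent blocks due to Cabanes--Enguehard, Enguehard and Kessar--Malle. Under the standing hypothesis $\bL = C_\bG^\circ(Z(\bL)^F_\ell)$, a defect group $D$ of $B = b_{\bG^F}(\bL,\la)$ fits into a short exact sequence
\[
 1 \longrightarrow Z(\bL^F)_\ell \longrightarrow D \longrightarrow P \longrightarrow 1,
\]
where $P$ is isomorphic to a Sylow $\ell$-subgroup of the relative Weyl group $W_{\bG^F}(\bL,\la)$. The kernel appears because it is a defect group of the block of $\bL^F$ containing $\la$, and equals $Z(\bL^F)_\ell$ precisely because $\la$ is assumed to have central $\ell$-defect; the quotient appears because the block $B$ is obtained from this covering block of $\bL^F$ by an extension controlled by the relative Weyl group.

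Granting this structural fact, the assertion is immediate by contraposition. If $\ell$ does not divide $|W_{\bG^F}(\bL,\la)|$, then $P$ is trivial, so $D = Z(\bL^F)_\ell$ is abelian (being contained in the abelian group $Z(\bL^F)$). This contradicts the hypothesis that $B$ has non-abelian defect groups, so $\ell$ must indeed divide $|W_{\bG^F}(\bL,\la)|$.

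The real substance of the argument therefore lies in assembling the defect group description in the generality required. For $\ell$ good and odd this is essentially \cite[Thm.~4.1]{CE94}; for bad primes in exceptional types it is contained in \cite{En00}; and the remaining cases, in particular $\ell=2$ in classical types, are covered by the techniques of \cite{KM}. The hypothesis $\bL = C_\bG^\circ(Z(\bL)^F_\ell)$ (the content of Lemma~\ref{lem:owncentraliser} in the relevant situations) together with the assumption of central $\ell$-defect for $\la$ are exactly the conditions under which those results apply and yield the clean extension above. The main obstacle I anticipate is pedestrian: a careful case check that every situation of the lemma is indeed covered by one of these references without additional restrictions on $\ell$ or on the type of $\bG$.
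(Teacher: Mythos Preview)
Your approach is correct in outline but takes a different, heavier route than the paper. You invoke the structural description of the defect group as an extension of $Z(\bL^F)_\ell$ by a Sylow $\ell$-subgroup of $W_{\bG^F}(\bL,\la)$, and then read off the conclusion. The paper instead gives a direct, self-contained Brauer-pair argument: with $Z=Z(\bL)^F_\ell$ one shows that $(Z,\tilde b)$ is a $B$-Brauer pair for the unique block $\tilde b$ of $C_{\bG^F}(Z)$ covering the block $b$ of $\la$; assuming $\ell\nmid|W_{\bG^F}(\bL,\la)|$ one deduces from central $\ell$-defect of $\la$ that $I_{C_{\bG^F}(Z)}(b)=\bL^F$, hence $Z$ is already a defect group of $\tilde b$, and then a Frattini argument on $N_{\bG^F}(Z,\tilde b)$ forces $N_{\bG^F}(\bL,\la)/\bL^F$ to have order divisible by $\ell$, a contradiction.

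The advantage of the paper's argument is that it works uniformly for arbitrary connected reductive $\bG$ straight from the two hypotheses, without any case division by type or prime. Your worry at the end is well placed and is not merely pedestrian: the extension description you quote is assembled in the literature under exactly the sort of restrictions you list (e.g.\ the paper itself invokes \cite[Thm.~7.12]{KM} only after excluding $\ell=2$ for classical types and the type~$A$ case with $\ell\mid|Z(\bG^F)|$), so to use it here you would have to argue separately that the excluded situations are vacuous under the standing hypotheses, and also reduce from general $\bG$ to the simple simply connected case. That can be done, but it is more work than the paper's five-line Brauer-pair computation, which sidesteps the issue entirely.
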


\begin{proof}
Let $Z= Z(\bL)^F_\ell$ and let $b$ be the block of $\bL^F$ containing $\la$.
Since $\bL=C_\bG^\circ(Z)$, and $Z$ is an $\ell$-subgroup of $\bL$ contained
in a maximal torus of $\bG$, $C_\bG(Z)/\bL$ is an $\ell$-group. Hence,
$\bL^F$ is a normal subgroup of $C_{\bG^F}(Z)$ of $\ell$-power index
and consequently, there is a unique block, say $\tilde b$ of $C_{\bG^F}(Z)$
covering $b$. Further, by \cite[Props.~2.12, 2.13(1), 2.15]{KM} and
\cite[Thm.~3.2]{BMM}, $(Z,\tilde b)$ is a $B$-Brauer pair.

Since $I_{C_{\bG^F}(Z)} (\la) / \bL^F \leq W_{\bG^F}(\bL,\la)$ and since
$C_{\bG^F}(Z)/\bL^F$ is an $\ell$-group, we may assume by way of contradiction
that $ I_{C_{\bG^F}(Z)} (\la) \leq \bL^F $. Further, since $\la$ is of central
$\ell$-defect in $\bL^F$, $\la$ is the unique character of $b$ with $Z$ in
its kernel. Thus, $I_{C_{\bG^F}(Z)}(b) = I_{C_{\bG^F}(Z)}(\la) \leq \bL^F$.
Consequently, $Z$ is a defect group of $\tilde b$.
Now the defect groups of $B$ are non-abelian, whereas $Z$ is abelian. Hence
$N_{\bG^F}(Z, \tilde b)/C_{\bG^F}(Z)$ is not an $\ell'$-group. On the
other hand, $N_{\bG^F}(Z,\tilde b)$ normalises $\bL^F$ and therefore acts by
conjugation on the set of $\ell$-blocks of $\bL^F$ covered by $\tilde b$. Since
$C_{\bG^F}(Z) $ acts transitively on the set of the $\ell$-blocks of $\bL^F$
covered by $\tilde b$, by the Frattini argument,
$N_{\bG^F}(Z, \tilde b) = C_{\bG^F}(Z) N_{\bG^F}(Z, b)$. Hence,
$$N_{\bG^F}(Z, b)/\bL^F= N_{\bG^F}(Z, b)/(N_{\bG^F}(Z, b) \cap C_{\bG^F}(Z))
  \cong N_{\bG^F}(Z, \tilde b)/ C_{\bG^F}(Z)$$
is not an $\ell' $ group. But again since $\la $ is of central $\ell$ defect,
$N_{\bG^F}(Z,b) \leq N_{\bG^F}(\bL,\la)$. Hence  $N_{\bG^F}(\bL, \la)/\bL^F$
is not an $\ell'$ group, contradicting our assumption.
\end{proof}

Recall that by the fundamental result of $e$-Harish-Chandra theory
\cite[Thm.~3.2]{BMM}, for any unipotent $e$-cuspidal pair $(\bL,\la)$ of $\bG$
there is a bijection
$$\rho_{\bL,\la}:\cE(\bG^F,(\bL,\la))\xrightarrow{1-1}\Irr(W_{\bG^F}(\bL,\la))$$
between the set $\cE(\bG^F,(\bL,\la))$ of irreducible constituents of
$R_\bL^\bG(\la)$ and $\Irr(W_{\bG^F}(\bL,\la))$. Moreover we have the following
relationship between the degrees of corresponding characters.

\begin{lem}   \label{lem:weylunipotentdeg}
 Let $(\bL,\la)$ be a unipotent $e$-cuspidal pair of $\bG^F$ and let
 $\chi\in\cE(\bG^F,(\bL,\la))$. Then
 $$\chi(1)_{\ell} = \frac{|\bG^F|_{\ell}\,\la(1)_{\ell}}
   {|\bL^F|_{\ell}\cdot|W_{\bG^F}(\bL,\la)|_{\ell}}\,(\rho_{\bL,\la} (\chi))(1)_{\ell}.$$
 In particular, there exist $\chi_1,\chi_2\in\cE(\bG^F,(\bL,\la))$ with
 $\chi_1(1)_\ell \ne \chi_2(1)_\ell$ if and only if there exists an
 irreducible character of $W_{\bG^F}(\bL,\la)$ with degree divisible by $\ell$.
\end{lem}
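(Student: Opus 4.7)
The plan is to derive the identity from the explicit degree formula of Broué--Malle--Michel within $e$-Harish-Chandra theory, \cite[Thm.~3.2]{BMM}. That theorem expresses, for $\chi \in \cE(\bG^F,(\bL,\la))$ with $\eta := \rho_{\bL,\la}(\chi)$, the degree in the form
$$\chi(1) = \pm\,\frac{|\bG^F|_{p'}\,\la(1)\,\eta(1)}{|\bL^F|_{p'}\,|W_{\bG^F}(\bL,\la)|}\,P_\eta(q),$$
where $P_\eta(q)$ is a positive integer coming from the generic degree of $\eta$ in the $\Phi_e$-cyclotomic Hecke algebra attached to $W_{\bG^F}(\bL,\la)$. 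Since $\ell \ne p$, the $p'$-parts of $|\bG^F|$ and $|\bL^F|$ share their $\ell$-parts with the group orders themselves, so the claimed $\ell$-part formula follows once $P_\eta(q)$ is shown to be coprime to $\ell$.

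The $\ell$-coprimality of $P_\eta(q)$ is the main point and, I expect, the principal obstacle. By the definition of $e = e_\ell(q)$, a value $\Phi_d(q)$ is divisible by $\ell$ exactly when $d$ has the form $e\ell^a$ for some $a \geq 0$. The case $d = e$ is excluded because the total $\Phi_e$-contribution to $\chi(1)$ is constant along the $e$-Harish-Chandra series $\cE(\bG^F,(\bL,\la))$ and is already accounted for by the factor $|\bG^F|_{p'}/|\bL^F|_{p'}$ (this reflects the $e$-cuspidality of $\la$, together with the fact that $|W_{\bG^F}(\bL,\la)|$ and $\eta(1)$ are themselves coprime to $\Phi_e(q)$). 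The cases $d = e\ell^a$ with $a \geq 1$ are ruled out because the $d$ entering the generic degree of $\eta$ are bounded by the reflection degrees of $W_{\bG^F}(\bL,\la)$ in its $\Phi_e$-reflection representation, and these are small multiples of $e$ controlled by the rank of $\bG$.

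Granting these exclusions, taking $\ell$-parts on both sides yields the first identity. The \emph{in particular} assertion is then a formal consequence: the formula shows that $\chi(1)_\ell / (\rho_{\bL,\la}(\chi))(1)_\ell$ is a constant depending only on $(\bL,\la)$, so the existence of two characters in $\cE(\bG^F,(\bL,\la))$ with distinct $\ell$-parts of degrees is equivalent to the existence of two characters in $\Irr(W_{\bG^F}(\bL,\la))$ with distinct $\ell$-parts of degrees; since the trivial character of $W_{\bG^F}(\bL,\la)$ is always present as a baseline with $\eta(1)_\ell = 1$, this in turn is equivalent to the existence of some $\eta \in \Irr(W_{\bG^F}(\bL,\la))$ of degree divisible by $\ell$.
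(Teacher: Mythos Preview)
The paper's own proof is a one-line citation to \cite[Thm.~4.2 and Cor.~6.3]{MaH}; the work is done there. Your approach --- extracting the $\ell$-part from the degree formula of $e$-Harish-Chandra theory --- is essentially the route taken in \cite{MaH}, so you are reconstructing that argument rather than offering an alternative.

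There is, however, a genuine gap in your justification that the residual factor $P_\eta(q)$ is coprime to $\ell$. You claim that the $\Phi_d(q)$ entering the generic degree have $d$ bounded by the reflection degrees of $W_{\bG^F}(\bL,\la)$, and that these are ``small multiples of $e$ controlled by the rank of $\bG$'', so that $d=e\ell^a$ with $a\ge1$ cannot occur. This fails already for the principal series: with $e=1$ and $\bL$ a maximal torus, the relative Weyl group is the full Weyl group of $\bG$, its reflection degrees run up to the Coxeter number, and the Schur elements involve $\Phi_2(q),\Phi_3(q),\ldots$ freely. For small $\ell$ --- and the lemma is invoked in the paper precisely for $\ell\le3$ --- these factors are divisible by $\ell$. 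What makes the formula true is not the absence of such factors but the equality of $\ell$-valuations $c_\eta(q)_\ell=(|W_{\bG^F}(\bL,\la)|/\eta(1))_\ell$ for the Schur element $c_\eta$; establishing this is the actual content of \cite[\S4--6]{MaH}, via a lifting-the-exponent argument for good odd $\ell$ together with case analysis for $\ell=2$ and bad primes. Your sketch does not supply it. A smaller point: the degree formula in \cite{BMM} is stated via Schur elements, not in the factored form $\eta(1)\cdot P_\eta(q)/|W_{\bG^F}(\bL,\la)|$ with $P_\eta(q)$ an integer; producing that factorisation already presupposes the valuation identity you are trying to prove.
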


\begin{proof}
This follows from \cite[Thm.~4.2 and Cor. 6.3]{MaH}.
\end{proof}

%%Also recall that by \cite{BMM} and \cite[Prop.~3.3]{CE94}, there are degree
%%preserving bijections between the sets of unipotent $e$-cuspidal characters
%%of $\bG^F$, of $(\bG/Z(\bG))^F$ and of $\tilde \bG^F$. We will denote
%%characters in bijection with the same symbol.

\begin{lem}   \label{lem:unileastheight}
 Let $\bG$ be connected reductive and let $B$ be a unipotent $\ell$-block of
 $\bG^F$. Then $B$ has an irreducible unipotent character of height zero.
\end{lem}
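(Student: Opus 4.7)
The plan is to invoke the classification of unipotent $\ell$-blocks of $\bG^F$ due to Cabanes--Enguehard \cite{CE94} and Enguehard \cite{En00} in order to attach an $e$-cuspidal pair to $B$, and then to pick out one explicit unipotent character in that block whose $\ell$-part of degree equals $|\bG^F:D|_\ell$, where $D$ is a defect group of $B$. A mild reduction at the outset, using the natural bijections between unipotent pairs of $\bG^F$, $[\bG,\bG]^F$ and $(\bG/Z(\bG))^F$ recorded earlier in this section, lets me assume that $\bG=[\bG,\bG]$ is simply connected.

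By the cited classification, $B = b_{\bG^F}(\bL,\la)$ for some unipotent $e$-cuspidal pair $(\bL,\la)$ with $\la$ of central $\ell$-defect in $\bL^F$; the unipotent characters lying in $B$ are precisely the members of $\cE(\bG^F,(\bL,\la))$; and a defect group $D$ of $B$ satisfies
$$|D| = |Z(\bL)^F_\ell|\cdot|W_{\bG^F}(\bL,\la)|_\ell.$$

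The key step is then to apply Lemma~\ref{lem:weylunipotentdeg} to the character $\chi\in\cE(\bG^F,(\bL,\la))$ which corresponds under $\rho_{\bL,\la}$ to the trivial character of the relative Weyl group $W_{\bG^F}(\bL,\la)$. Since the trivial character has degree one, the lemma yields
$$\chi(1)_\ell = \frac{|\bG^F|_\ell\,\la(1)_\ell}{|\bL^F|_\ell\cdot|W_{\bG^F}(\bL,\la)|_\ell}.$$
Substituting $\la(1)_\ell = |\bL^F|_\ell/|Z(\bL)^F_\ell|$, which is valid because $\la$ has central $\ell$-defect, this collapses to
$$\chi(1)_\ell = \frac{|\bG^F|_\ell}{|Z(\bL)^F_\ell|\cdot|W_{\bG^F}(\bL,\la)|_\ell} = |\bG^F:D|_\ell,$$
so that $\chi$ is a unipotent character of height zero in $B$, as required.

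The main obstacle is ensuring that all ingredients of the classification (the labelling of unipotent blocks by $e$-cuspidal pairs, the central-$\ell$-defect property of $\la$, and the structural formula for $|D|$) are available uniformly across all primes $\ell\ne p$ and in all Lie types. For $\ell$ good this is covered by \cite{CE94}; for bad primes and the remaining exceptional cases one must appeal to Enguehard's case-by-case analysis in \cite{En00}, paying extra attention to the small primes $\ell\in\{2,3\}$ and to the exceptional types, where the structure of the relative Weyl group and of the cuspidal datum requires individual verification.
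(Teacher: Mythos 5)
Your argument follows the same route as the paper: reduce to $\bG=[\bG,\bG]$, invoke the classification of unipotent blocks, and then apply Lemma~\ref{lem:weylunipotentdeg} to the member of $\cE(\bG^F,(\bL,\la))$ corresponding to the trivial character of $W_{\bG^F}(\bL,\la)$, combined with the central-$\ell$-defect property of $\la$ and the defect group formula, to get $\chi(1)_\ell=|\bG^F:D|_\ell$. This is exactly what the paper does in the final paragraph of its proof.

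However, there is a gap in the step you flag as the ``main obstacle'' and then leave unresolved. The statement that $B=b_{\bG^F}(\bL,\la)$ with $\la$ of central $\ell$-defect and with $|D|=|Z(\bL)^F_\ell|\cdot|W_{\bG^F}(\bL,\la)|_\ell$ is \emph{not} available uniformly from \cite{CE94} and \cite{En00}. The formula the paper cites, \cite[Thm.~7.12(a),(d)]{KM}, is only applied to a restricted set of configurations; it excludes precisely the case where $\bG$ is of type $A$ with $\ell$ odd dividing $|Z(\bG^F)|$, and the case $\ell=2$ with $\bG$ of classical type. Your split into ``$\ell$ good'' versus ``$\ell$ bad'' does not capture this: for type $A$ every prime is good, yet when $\ell$ divides the order of the centre the cuspidal-pair machinery (cf.\ Lemma~\ref{lem:owncentraliser}, whose hypotheses rule out exactly these two situations) does not directly give the defect group structure you assume. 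The paper disposes of these two configurations by a different and simpler observation: by \cite[Prop.~3.3]{CE94} (type $A$, $\ell$ odd dividing the centre) respectively \cite[Thm.~13]{CE93} ($\ell=2$ classical) the block $B$ is the principal block, and then the trivial character is the desired height-zero unipotent character. You should add this case distinction; once these two cases are set aside, your calculation via Lemma~\ref{lem:weylunipotentdeg} is the paper's and is correct. One further minor point: the paper also explicitly reduces to $\bG$ simple (not merely semisimple simply connected) before invoking the case-by-case results of \cite{En00}, which are stated at that level of generality.
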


\begin{proof}
We may assume that $\bG=[\bG,\bG]$. Indeed, set $\bG_0=[\bG,\bG]$ and let
$B_0$ be the unipotent block of $\bG_0^F$ covered by $B$. Then the degrees in
$\Irr(B)\cap\cE(\bG^F,1)$ are the same as the degrees in
$\Irr(B_0)\cap\cE(\bG_0^F,1)$. On the other hand, if $\chi\in\Irr(B_0)$
and $\chi'\in\Irr(B)$ covers $\chi$, then $\chi'(1)$ is divisible by $\chi(1)$.
Since every $\chi'\in \Irr(B)$ covers some $\chi\in \Irr(B_0)$ and vice versa
(see for example \cite[Ch.~5, Lemmas~5.7, 5.8]{NT}), we may assume that
$\bG = \bG_0$.

We next claim that we may assume that $\bG$ is simple. Indeed, let
$\bar\bG = \bG/Z(\bG)$ and $\bar B$ the block of $\bar\bG^F$ dominated
by $B$. Let $ H \cong \bG^F/Z(\bG^F)$ be the image of $\bG^F$ in $\bar \bG^F$
under the canonical map from $\bG$ to $\bar \bG$ and let $C$ be the block of
$H$ dominated by $B$. Then $H$ is normal in $\bar \bG^F$ and $C$ is covered
by $\bar B$. The degrees in $\Irr(\bar B) \cap \cE(\bar \bG^F, 1) $ are the
same as the degrees in $\Irr(B) \cap \cE(\bG^F, 1) $ and by the same arguments
as above every irreducible character degree of $\bar B$ is divisible by an
irreducible character degree of $C$ and the set of irreducible character
degrees of $C$ is contained in the set of irreducible character degrees of $B$.
Thus, if the result is true for $B$, it holds for $\bar B$.
So, we may assume that $\bG = [\bG, \bG]$ is simply connected, and hence also
that $\bG$ is simple.

If $\bG$ is of type $A$ and $\ell$ is odd and divides the order of $ Z(\bG^F)$,
then by \cite[Theorem, Prop.~3.3]{CE94} $B$ is the principal block and the
result holds. If $\ell=2$ and $\bG$ is of classical type, then by
\cite[Thm.~13]{CE93} again $B$ is the principal block. In the remaining cases
by the results of \cite{CE94} and \cite{En00} there exists an $e$-cuspidal pair $(\bL,\la)$
for $B$ such that $\la$ is of central $\ell$-defect and a defect group of $B$
is an extension of $Z(\bL^F)_\ell$ by a Sylow $\ell$-subgroup of
$W_{\bG^F}(\bL,\la)$ (see \cite[Thm.~7.12(a) and~(d)]{KM}). Now the result
follows from Lemma~\ref{lem:weylunipotentdeg} by considering the character
in $\cE(\bG^F,(\bL,\la))$ corresponding to the trivial character of
$W_{\bG^F}(\bL,\la)$.
\end{proof}

\begin{lem}   \label{lem:centraldefect}
 Suppose that $\bG$ is simple and let $\la $ be a unipotent $e$-cuspidal
 character of $\bG^F$ of central $\ell$-defect. Then $\la $ is of $\ell$-defect
 zero. Moreover, any diagonal automorphism of $\bG^F$ of $\ell$-power order
 is an inner automorphism of $\bG^F$.
\end{lem}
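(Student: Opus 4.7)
The plan is to show that both parts of the lemma reduce to a single arithmetic assertion: under the hypothesis, $|Z(\bG_{\mathrm{sc}}^F)|_\ell = 1$, where $\bG_{\mathrm{sc}}$ denotes a simply connected cover of $\bG$. For the first claim, central $\ell$-defect of $\la$ means $\la(1)_\ell = |\bG^F|_\ell/|Z(\bG^F)|_\ell$, while defect zero means $\la(1)_\ell = |\bG^F|_\ell$, so the claim reduces to $|Z(\bG^F)|_\ell = 1$. Lifting $\la$ through the isogeny bijection recalled before Lemma~\ref{lem:redweyl} preserves both $e$-cuspidality and character degrees, and since $|\bG_{\mathrm{sc}}^F| = |\bG^F|$, it suffices to prove $|Z(\bG_{\mathrm{sc}}^F)|_\ell = 1$ (which controls $|Z(\bG^F)|_\ell$ via the quotient map on centres). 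For the second claim, the Lang--Steinberg theorem applied to $1\to Z(\bG_{\mathrm{sc}})\to\bG_{\mathrm{sc}}\to\bG_{\mathrm{ad}}\to 1$ identifies the group of diagonal automorphisms of $\bG^F$ modulo inner with a finite group of order $|Z(\bG_{\mathrm{sc}}^F)|$, so its $\ell$-part being trivial is again equivalent to $|Z(\bG_{\mathrm{sc}}^F)|_\ell = 1$.

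To establish this assertion I proceed case by case in the type of $\bG_{\mathrm{sc}}$. The key observation is that $\ell\mid|Z(\bG_{\mathrm{sc}}^F)|$ imposes a strong congruence on $q$, which pins down $e=e_\ell(q)$ and so constrains severely the possible unipotent $e$-cuspidal characters. In split type $A_{n-1}$, $|Z(\bG_{\mathrm{sc}}^F)|=\gcd(n,q-1)$, so $\ell$-divisibility forces $\ell\mid q-1$, hence $e=1$; but the only $1$-core partition is the empty partition, so no nontrivial unipotent $1$-cuspidal characters of $\SL_n^F$ exist for $n\geq 2$, contradicting the hypothesis. The unitary type ${}^2A_{n-1}$ and the types $B_n, C_n, D_n, {}^2D_n$ are handled analogously: $\ell\mid|Z(\bG_{\mathrm{sc}}^F)|$ fixes $e$, and the classification of unipotent $e$-cuspidal characters via $e$-cores or cuspidal $e$-symbols (Brou\'e--Malle--Michel) then excludes central $\ell$-defect. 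For the exceptional types $E_6, E_7, E_8, F_4, G_2, \tw2B_2, \tw2G_2, \tw2F_4$ and for $\tw3D_4$, I inspect the tables of unipotent $e$-cuspidal pairs and their $\ell$-defects in \cite{En00} (already consulted in Lemma~\ref{lem:unipsmall}) and verify the claim row by row.

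The main obstacle is the classical-type case analysis, where the combinatorial parameters (cores and cuspidal symbols) of unipotent $e$-cuspidal characters must be carefully reconciled with the congruence conditions for $\ell$ to divide $|Z(\bG_{\mathrm{sc}}^F)|$; the exceptional types and $\tw3D_4$ reduce to a finite inspection of the published tables.
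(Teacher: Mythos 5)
Your reduction of both parts of the lemma to the single assertion $|Z(\bG_{\mathrm{sc}}^F)|_\ell = 1$ is sound, and this is a genuinely different route from the paper: the paper instead handles the good odd primes by citing \cite[Prop.~4.3]{CE94} (which asserts that the \emph{corresponding} unipotent $e$-cuspidal character of the adjoint and the regularly embedded group also has central $\ell$-defect), disposes of $\ell=2$ for classical types by appealing to \cite[Thm.~13]{CE93} (the principal block is the unique unipotent $2$-block, and its defect group --- a Sylow $2$-subgroup --- is non-abelian, so no unipotent character of central $2$-defect can exist), and inspects the tables of \cite{En00} for the remaining bad-prime exceptional cases.

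However, your proposal has a genuine gap precisely in the cases that the paper's block-theoretic shortcut handles, namely $\ell = 2$ for classical types. You write that $\ell\mid q-1$ ``hence $e=1$'', but for $\ell=2$ the parameter $e=e_2(q)$ is defined as the order of $q$ modulo $4$, not modulo $2$ (see the conventions fixed at the start of the section). Thus for $q\equiv 3\pmod 4$ one has $2\mid q-1$ but $e=2$, and $\SL_n(q)$ (and likewise the simply connected groups of types $B_n$, $C_n$, $D_n$) do possess unipotent $2$-cuspidal characters --- those labelled by $2$-cores (staircase partitions) or $2$-cuspidal symbols --- for infinitely many ranks. So ``no nontrivial unipotent $e$-cuspidal characters exist'' is false in these cases, and the conclusion cannot be reached vacuously. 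One would instead have to compute the $2$-part of the degrees of all such cuspidal unipotent characters and verify they are strictly smaller than $|\bG_{\mathrm{sc}}^F|_2/|Z(\bG_{\mathrm{sc}}^F)|_2$; that computation is doable (and presumably gives the right answer) but is a substantial omission, and the appeal to $1$-cores being trivial does not cover it. You should either restrict your claim to odd $\ell$ and supply the separate block-theoretic argument for $\ell = 2$ classical, or carry out the $2$-adic degree computation explicitly.
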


\begin{proof}
Let $\bG\hookrightarrow\tilde\bG$ be a regular embedding and set $\bar\bG:=
\bG/Z(\bG)$. If $\ell$ is odd, good for $\bG $ and $\ell \ne 3 $ if
$\bG^F= \tw3D_4(q) $, then by
\cite[Prop.~4.3]{CE94}, every unipotent $e$-cuspidal character of
$\bar\bG^F$ and of $\tilde \bG^F$ is of central $\ell$-defect.
The first assertion follows since $\bar\bG^F$ has trivial center and
since $\bar\bG^F$ and $\bG^F$ have the same order. For the second
assertion, note the central $\ell$-defect property of $\la $ as a character of
$\bG^F$ and $\tilde \bG^F$ implies that
$|\tilde \bG^F: Z (\tilde \bG^F)|_\ell = | \bG^F: Z(\bG^F)|_\ell$, hence
$Z(\tilde\bG^F) \bG^F$ is of $\ell'$-index in $\tilde \bG^F$, thus proving
the result.

If $\ell =2$ and $\bG$ is of classical type $A$, $B$, $C$ or $D$ then by
\cite[Thm.~13]{CE93} the principal block of
$\bG^F$ is the only unipotent block of $\bG^F$, and the Sylow $2$-subgroups
of $\bG^F$ are non-abelian, hence $\bG^F$ has no unipotent character of
central $2$-defect. If $\ell$ is bad for $\bG $ and $\bG$ is of exceptional
type, or if $\ell=3 $ and $\bG^F= \tw3D_4(q)$, then the result follows by
inspecting the tables in \cite{En00}. The last assertion follows as in type
$E_6$ the outer diagonal automorphism is of order $3$, but there are no
unipotent $e$-cuspidals of central $3$-defect, and  similarly in type $E_7$,
the outer diagonal automorphism has order $2$, but there are no unipotent
$e$-cuspidals of central $2$-defect.
\end{proof}

%%%%%%%%%%%%%%%%%%%%%%%%%%%%%%%%
\subsection{Some special blocks}
Here we investigate in some detail certain unipotent blocks for $\ell\le3$
related to the exceptions in Lemma~\ref{lem:unipsmall}.

\begin{lem}   \label{lem: SL3}
 Let $\bG^F =\SL_3(q)$, $3|(q-1)$, and let $B$ be the principal 3-block of
 $\bG^F$.
 \begin{enumerate}
  \item[\rm(a)] There exists an irreducible character of positive $3$-height
   in $B$. This contains $Z(\bG^F)$ in its kernel when $q\equiv1\pmod9$.
  \item[\rm(b)] If $q\not\equiv1\pmod9$, then there exists an irreducible
   character in $B$ with $Z(\bG^F)$ in its kernel and which is not stable under
   the outer diagonal automorphism of $\bG^F$.
 \end{enumerate}
 The analogous result holds for $\bG^F=\SU_3(q) $ with $3$ dividing $q+1$.
\end{lem}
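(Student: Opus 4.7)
The plan is to exhibit the required characters via Lusztig's Jordan decomposition applied to well-chosen 3-elements of $\bG^{*F}=\PGL_3(q)$. Since $3\mid q-1$, fix a character $\omega\in\Irr(\FF_q^*)$ of order 3. The $\SU_3(q)$ case with $3\mid q+1$ will be entirely parallel, replacing the split torus by the twisted torus of order $(q+1)^2$ and the number $q^2+q+1$ by its unitary analogue $q^2-q+1$ (which also has 3-part exactly 3 whenever $3\mid q+1$); I therefore focus on $\SL_3(q)$.

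For part (a), consider the 3-element $s=[\diag(1,1,\omega)]\in\PGL_3(q)$. Its connected centralizer $C^\circ_{\bG^*}(s)\cong\GL_2(q)$, so Lusztig's Jordan decomposition identifies $\cE(\bG^F,s)$ with the two unipotent characters of $\GL_2(q)$; the resulting characters of $\bG^F$ have degrees $q^2+q+1$ and $q(q^2+q+1)$. Since $(q^2+q+1)_3=3$ whenever $3\mid q-1$, both have positive 3-height, and both lie in $B$ because $s$ is a 3-element. Their central characters are determined, via Lusztig's isomorphism $\Irr(Z(\bG^F))\cong\bG^{*F}/[\bG^{*F},\bG^{*F}]\cong\FF_q^*/(\FF_q^*)^3$, by the image of $s$, namely by $\det\diag(1,1,\omega)=\omega$, which is a cube in $\FF_q^*$ precisely when $9\mid q-1$. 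Hence when $q\equiv1\pmod 9$ these characters kill $Z(\bG^F)$, yielding the extra kernel statement.

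For part (b), with $q\not\equiv1\pmod 9$, consider $R=R_T^{\bG}(\theta)$ for the maximally split torus $T$ of $\bG$ and the character $\theta\in\Irr(T^F)$ obtained by restricting $(1,\omega,\omega^{-1})\in\Irr((T^{\GL_3})^F)$ to $T^F$. A direct check shows that the 3-cycle $(1\,2\,3)\in W\cong\fS_3$ stabilizes $\theta$: indeed $(\omega^{-1},1,\omega)=\omega\cdot(1,\omega,\omega^{-1})$ in $\Irr((T^{\GL_3})^F)$, so the two restrict to the same character of $T^F$. Hence the Weyl stabilizer of $\theta$ is $A_3$, $\langle R,R\rangle=3$, and $R$ splits as $\chi_1+\chi_2+\chi_3$, a sum of three distinct irreducible constituents of equal degree $(q+1)(q^2+q+1)/3$. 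Each $\chi_i$ lies in $B$ (belonging to the Lusztig series of a 3-element of $\PGL_3(q)$) and has $Z(\bG^F)$ in its kernel, since $\det\diag(1,\omega,\omega^{-1})=1$ forces the corresponding element of $\PGL_3(q)$ into $\operatorname{PSL}_3(q)$. To see that the $\chi_i$ form a single orbit under the outer diagonal automorphism, I lift to $\tilde\bG=\GL_3$: the irreducible $\tilde R=R_T^{\tilde\bG}(\tilde\theta)$ is stable under twisting by all linear characters of $\tilde\bG^F$ of order dividing~3, so Clifford theory forces $\tilde R|_{\bG^F}=\chi_1+\chi_2+\chi_3$ with the $\chi_i$ permuted transitively by $\tilde\bG^F/\bG^F Z(\tilde\bG^F)\cong C_3$, which is exactly the outer diagonal automorphism group of $\bG^F$; in particular no $\chi_i$ is stable.

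The main technical obstacle is handling the Jordan decomposition for the simply connected $\bG$ whose dual $\bG^*$ has disconnected semisimple centralizers: one must relate the multiple constituents of $R$ to characters in distinct rational Lusztig series arising from the splitting of a single geometric class, and verify that the outer diagonal automorphism permutes these rational classes transitively. Passing to $\tilde\bG=\GL_3$ and invoking Clifford theory on the normal inclusion $\bG^F Z(\tilde\bG^F)\trianglelefteq\tilde\bG^F$ provides the cleanest way to bypass these complications.
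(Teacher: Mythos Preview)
Your proof is correct. For part~(b) your approach coincides with the paper's: both take the $3$-element $t=[\diag(1,\eta,\eta^{2})]\in\PGL_3(q)$ (your $\theta=(1,\omega,\omega^{-1})$ is dual to this), observe that its connected centraliser is a maximal torus while the full centraliser has three components, and take an irreducible constituent of the corresponding Deligne--Lusztig character. The paper simply asserts non-stability under the diagonal automorphism, whereas you spell out the Clifford-theoretic verification via $\GL_3$; this is more explicit but not essentially different. (There is a harmless slip in your scalar computation: one has $(\omega^{-1},1,\omega)=\omega^{-1}\cdot(1,\omega,\omega^{-1})$, not $\omega\cdot(\cdots)$, but of course either scalar gives the same restriction to the $\SL_3$-torus.)

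For part~(a) the approaches genuinely differ. The paper does not construct a character at all: it observes that the Sylow $3$-subgroups of $\SL_3(q)$, and of $\PSL_3(q)$ when $9\mid q-1$, are non-abelian, and then invokes the Blau--Ellers verification of (BHZ) for central quotients of $\SL_n$ and $\SU_n$ as a black box. Your argument instead exhibits the degree-$(q^2+q+1)$ character in the Lusztig series of $s=[\diag(1,1,\omega)]$ directly, checks $(q^2+q+1)_3=3$, and reads off the central character from $\det(s)=\omega$ modulo cubes. This is self-contained and avoids the external reference; the paper's route is shorter on the page but relies on a substantial cited result.
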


\begin{proof}
Let $\bG$ be simple, simply connected of type $A_2$ such that $\bG^F =\SL_3(q)$
with $3|(q-1)$. Then the Sylow 3-subgroups of $\bG^F$ are non-abelian and if
$q\equiv1\pmod9$, then the Sylow $3$-subgroups of $\bG^F/Z(\bG^F)$ are
non-abelian, hence (a) is a consequence of \cite{BE}.
So we may assume that $q\not\equiv1\pmod9$. Let $\eta $ be a primitive third
root of unity in $\FF_q$ and let $t\in\bG^{*F} = \PGL_3(q)$ be the image of
$\diag(1,\eta,\eta^2)$ under the canonical surjection of $\GL_3(q)$
onto $\PGL_3(q)$. So, $C_{\bG^*}^\circ(t)$ is  a maximal torus of $\bG^*$
and $|C_{\bG^*}(t)/C_{\bG^*}^\circ(t)|=3$. Let $\bT$ be an $F$-stable maximal
torus of $\bG$ in duality with $C_{\bG^*}^\circ(t)$ and let $\hat t$ be the
linear character of $\bT^F$ in duality with $t$. Let $\psi$ be an irreducible
constituent of $\RTG(\hat t)$.
Then, $\psi$ is not  stable under the outer diagonal automorphism of $\bG^F$.
Further, $\psi\in\Irr(B)$ as $t$ is a $3$-element and the principal block of
$\bG^F$ is the only unipotent block of $\bG^F$. Finally, $Z(\bG^F)$ is
contained in the kernel of $\psi$ as $t\in [\bG^{*F},\bG^{*F}]$. The proof
for the unitary case is entirely similar.
\end{proof}

\begin{lem}   \label{lem:E6}
 Let $\bG$ be simple, simply connected of type $E_6$, $\bG^F =E_6(q)$,
 $3|(q-1)$, and let $(\bL,\la)$ be a unipotent $1$-cuspidal pair corresponding
 to Line~8 of the $E_6$-table in \cite{En00}.
 \begin{enumerate}
  \item[\rm(a)] There exists an irreducible character of positive $3$-height
   in $B=b_{\bG^F}(\bL,\la)$. This contains $Z(\bG^F)$ in its kernel when
   $q\equiv1\pmod9$.
  \item[\rm(b)] If $q\not\equiv1\pmod9$, then there exists an irreducible
   character in $B$ with $Z(\bG^F)$ in its kernel and which is not stable under
   the outer diagonal automorphism of $\bG^F$.
 \end{enumerate}
 An analogous result holds for $\bG^F=\tw2E_6(q)$ with $3$ dividing $q+1$.
\end{lem}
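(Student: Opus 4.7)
The plan is to follow the strategy of Lemma~\ref{lem: SL3}, producing suitable characters in $B$ via Lusztig's Jordan decomposition from non-unipotent series. By Lemma~\ref{lem:unipsmall} we have $W_{\bG^F}(\bL,\la)\cong\fS_3$, and since every irreducible character of $\fS_3$ has $3'$-degree, Lemma~\ref{lem:weylunipotentdeg} implies that every unipotent character in $\cE(\bG^F,(\bL,\la))$ has $3$-height zero in $B$. Any positive-height character of $B$ must therefore lie in a non-unipotent Lusztig series. Also, by \cite[Thm.~7.12]{KM} combined with $W_{\bG^F}(\bL,\la)\cong\fS_3$, the defect group $D$ of $B$ satisfies $|D|_3=|Z(\bL^F)_3|\cdot 3$.

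For part~(a), I would choose a semisimple $3$-element $s\in Z(\bL^*)^F$ in general position, so that $C_{\bG^*}(s)=\bL^*$, and let $\chi\in\cE(\bG^F,s)$ be the character corresponding to $\la$ under Lusztig's Jordan decomposition (with $\la$ viewed as a unipotent character of $\bL^{*F}\cong\bL^F$). Block containment $\chi\in B$ should follow from the $e$-cuspidal block classification of Bonnaf\'e-Rouquier/Cabanes-Enguehard (cf.\ \cite{En00}). A short computation from the Jordan decomposition degree formula, combined with the central $3$-defect of $\la$, gives $\chi(1)_3=|\bG^F|_3/|Z(\bL^F)_3|$, so $\chi$ has $3$-height exactly $1$. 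When $q\equiv1\pmod9$, each cyclic factor of the rank-$2$ torus $Z(\bL^*)^F_3$ has order at least $9$, and the intersection $Z(\bL^*)^F_3\cap[\bG^{*F},\bG^{*F}]$ (a subgroup of index at most $3$) still contains order-$3$ elements in general position; choosing $s$ in it yields $Z(\bG^F)\leq\ker\chi$ by Lusztig's formula for the central character of $\chi$.

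For part~(b), the idea is instead to take $s\in Z(\bL^*)^F$ of order $3$ lying in $[\bG^{*F},\bG^{*F}]$ and in the subgroup fixed by a triality element in $N_{W(E_6)}(W(\bL^*))/W(\bL^*)\cong\fS_3$, arranging that $C_{\bG^*}(s)/C_{\bG^*}^\circ(s)$ contains a $3$-element while $C_{\bG^*}^\circ(s)=\bL^*$. Then $Z(\bG^F)\leq\ker\chi$ for any $\chi\in\cE(\bG^F,s)$ as before, and the outer diagonal automorphism of $\bG^F$ (which has order $3$) acts non-trivially on $\cE(\bG^F,s)$ through the component group $C_{\bG^*}(s)/C_{\bG^*}^\circ(s)$ via Lusztig's extended Jordan decomposition; hence at least one constituent $\chi\in\cE(\bG^F,s)\cap B$ fails to be stable under it.

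The main obstacle in both parts is verifying $\chi\in B$, which requires careful application of the $e$-cuspidal block label description to identify the block label of $B$ with the datum attached to $(s,\la)$. A secondary difficulty in part~(b) for the tight regime $q\not\equiv1\pmod9$ is simultaneously meeting the constraints on $s$ (order~$3$, in $[\bG^{*F},\bG^{*F}]$, triality-fixed, and in suitably general position relative to roots outside $\bL^*$). Finally, the twisted case $\bG^F=\tw2E_6(q)$ should follow by the same argument with $e=2$ and $q+1$ replacing $e=1$ and $q-1$ throughout.
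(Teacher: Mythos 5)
Your strategy is close in spirit to the paper's but differs in two substantive ways, and both differences leave real gaps.

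For part~(a), the paper does \emph{not} look for a semisimple element $s$ with $C_{\bG^*}(s)=\bL^*$. Instead it takes $t\in\bG^{*F}_3$ with $C_{\bG^*}(t)=\bM^*$ a $1$-split Levi of type $D_5$ with $\bL^*\subseteq\bM^*$, which lies in $[\bG^{*F},\bG^{*F}]$ precisely when $q\equiv1\pmod 9$, and then forms $\psi=\pm R_\bM^\bG(\hat t\eta)$ for a suitable constituent $\eta$ of $R_\bL^\bM(\la)$. This choice is not cosmetic: your version requires an order-$3$ element of $Z(\bL^*)^F\cap[\bG^{*F},\bG^{*F}]$ whose \emph{full} centraliser equals $\bL^*$, and you only offer a heuristic for why such an element should exist. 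That is a delicate combinatorial assertion (for an order-$3$ torus element the centraliser is forced to contain every root $\alpha$ with $\alpha(s)=1$, and one must check that for some $s$ in the derived group this set is exactly the root system of $\bL^*$ and the component group is trivial); it is not a consequence of ``general position'' reasoning over the finite $3$-torsion subgroup. The paper sidesteps the issue entirely by using the larger Levi $\bM^*$.

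The second gap is block membership, which you flag yourself as ``the main obstacle'' but do not resolve. The paper closes it with a specific two-step argument: it computes $d^{1,\bG^F}(\psi)=\pm d^{1,\bG^F}(R_\bM^\bG(\eta))$ using compatibility of Lusztig induction with the decomposition map and the fact that $\hat t$ is an $\ell$-element (so $d^{1,\bM^F}(\hat t\eta)=d^{1,\bM^F}(\eta)$), and then invokes positivity of ordinary ($1$-)Harish-Chandra theory to conclude that all constituents of $R_\bM^\bG(\eta)$ lie in the $1$-Harish-Chandra series above $(\bL,\la)$, hence in $B$. Appealing vaguely to the $e$-cuspidal block classification does not substitute for this; that classification attaches blocks to $e$-Jordan-cuspidal pairs, and identifying which Lusztig series $\cE(\bG^F,s)$ land in the given unipotent block requires exactly the kind of decomposition-map or Bonnaf\'e--Rouquier argument the paper supplies. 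Note that the same decomposition-map argument would also give block membership in your setup, so if you added it your proof of~(a) could be repaired provided the existence question above is settled.

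For part~(b) your element $t'$ (with $C^\circ_{\bG^*}(t')=\bL^*$ and component group of order~$3$) is the same as the paper's, which cites L\"ubeck's tables for its existence; the ``triality element'' packaging is unnecessary, and again the block-membership step is what needs the explicit $d^{1,\bG^F}$ argument.
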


\begin{proof}
There exists $t\in \bG^{*F}_{3}$ such that $\bM^*:=C_{\bG^*}(t)$ is a $1$-split
Levi subgroup of $\bG^*$ of type $D_5$ containing $\bL^*$, which is contained
in $[\bG^{*F},\bG^{*F}]$ if and only if $q\equiv1\pmod9$, see e.g.
\cite{Lue}. Denoting by $\bM\ge\bL$ an $F$-stable Levi subgroup of $\bG$ in
duality with $\bM^*$ and by $\hat t$ the linear character of $\bM^F$
corresponding to $t$ we thus have that $Z(\bG^F)$ is contained in the kernel
of $\hat t$ if $q\equiv1\pmod9$. Moreover there is an irreducible constituent
$\eta$ of $R_\bL^\bM(\la)$ such that
$\psi:= \epsilon_\bM\epsilon_\bG R_\bM^\bG(\hat t\eta)$ has
$\psi(1)_3>\chi(1)_3$ for any $\chi\in\cE(\bG^F,1)\cap\Irr(B)$. Now
$$ d^{1,\bG^F}(\psi) = \pm d^{1,\bG^F}(R_\bM^\bG(\hat t \eta))
  = \pm R_\bM^\bG (d^{1, \bM^F}(\hat t \eta))
  = \pm R_\bM^\bG (d^{1, \bM^F} (\eta)) = d^{1,\bG^F}(R_\bM^\bG(\eta)). $$
Since $\eta $ is a constituent of $R_\bL^\bM (\la)$ and $\bM$ is $1$-split
in $\bG$, the positivity of $1$-Harish-Chandra theory yields that every
constituent of $R_\bM^\bG (\eta)$ is a constituent of $R_\bL^\bG(\la)$ and
hence in particular $\psi$ is in $\Irr(B)$, proving~(a).
\par
Now assume that $q\not\equiv1\pmod9$. Again by \cite{Lue} there is
$t'\in\bG^{*F}_{3}$ such that $C_{\bG^*}^\circ(t')=\bL^*$, and
$|C_{\bG^*}(t')/C_{\bG^*}^\circ(t')|=3$. Let $\psi'$ be an irreducible
constituent of $R_\bL^\bG(\hat t'\la)$ for $\la\in \cE(\bL^F, 1)$ and
$\hat t $ in duality with $t$. Then $\psi'$ is not stable under the diagonal
automorphism of $\bG^F$, and it lies in $B$ by the same argument as for $\psi$.
The arguments for $\tw2E_6(q)$ are entirely similar.
\end{proof}

\begin{lem}   \label{lem: SL2}
 Let $\bG^F= \SL_2(q)$ with $q$ odd. The principal $2$-block $B$ of $\bG^F$
 contains an irreducible character of even degree. If $q \equiv 1\mod 4$,
 then there exists an irreducible character of even degree in $B$ which
 contains $Z(\bG^F)$ in its kernel. If $q\equiv 3\mod 4$ then there exists an
 irreducible character in $B$ which contains $Z(\bG^F)$ in its kernel and
 which is not stable under the outer diagonal automorphism of $\bG^F$.
\end{lem}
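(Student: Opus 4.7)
The plan is to combine the explicit character table of $\SL_2(q)$ for $q$ odd with the description of the principal $2$-block of $\bG^F$ in terms of Lusztig series. By the classification of $\ell$-blocks of finite reductive groups, the principal $2$-block $B$ of $\bG^F$ is the disjoint union of rational Lusztig series $\cE(\bG^F,s)$ for $s$ running over conjugacy classes of semisimple $2$-elements of $\bG^{*F}=\PGL_2(q)$. Moreover, by the isomorphism $\Irr(Z(\bG^F))\cong\bG^{*F}/[\bG^{*F},\bG^{*F}]$ of \cite{Lu88}, a character in $\cE(\bG^F,s)$ contains $Z(\bG^F)$ in its kernel precisely when $s\in[\bG^{*F},\bG^{*F}]=\PSL_2(q)$, while the outer diagonal automorphism of $\bG^F$ permutes Lusztig series through the natural action of $\PGL_2(q)/\PSL_2(q)\cong\mathbb{Z}/2$.

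For the initial assertion, I would take $s$ an involution in either maximal torus of $\bG^{*F}$; then $\cE(\bG^F,s)\subset B$ consists of the two half-principal-series (resp.\ half-discrete-series) characters of $\SL_2(q)$ of degree $(q+1)/2$ (resp.\ $(q-1)/2$). Since at least one of $(q\pm1)/2$ is even for $q$ odd, $B$ then contains an irreducible character of even degree.

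For the case $q\equiv1\pmod4$: a determinant-modulo-squares computation shows that the split-torus involution of $\PGL_2(q)$ lies in $\PSL_2(q)$, so the associated half-principal-series characters in $B$ are already $Z$-trivial; however their degree $(q+1)/2$ is odd in this case. To produce an even-degree $Z$-trivial character in $B$, I would take $s\in\PSL_2(q)$ a $2$-element of order $4$ in the split torus, which exists as soon as $8\mid q-1$; the associated Lusztig series $\cE(\bG^F,s)$ then contributes a character of degree $q+1$ of the required kind. For the case $q\equiv3\pmod4$: now the non-split-torus involution $s$ of $\PGL_2(q)$ lies in $\PSL_2(q)$ (again by the determinant criterion), and $\cE(\bG^F,s)$ is precisely the pair of half-discrete-series characters of degree $(q-1)/2$. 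These lie in $B$, contain $Z(\bG^F)$ in their kernels, and are interchanged by the outer diagonal automorphism, as is standard for the pair of half-series characters of $\SL_2(q)$.

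The main obstacle is the determinant-modulo-squares bookkeeping controlling when a semisimple $2$-element of $\PGL_2(q)$ lies in $\PSL_2(q)$ (this is governed by the class of $q\bmod8$), together with the parity analysis of degrees via the Jordan decomposition formula $\chi(1)=|\bG^{*F}:C_{\bG^*}(s)^F|_{p'}\,\psi(1)$. Residual low values of $q$ (for instance $q\equiv5\pmod8$ in the second assertion, where the split torus of $\PSL_2(q)$ carries no $2$-element of order $\geq4$) would be treated by a short direct inspection of the character table.
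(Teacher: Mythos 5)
Your approach is essentially the paper's: the paper's own proof is the single line ``this follows the lines of the proof of Lemma~\ref{lem: SL3}'', and your Lusztig-series analysis is the rank-one analogue of that argument carried out explicitly from the character table of $\SL_2(q)$. The treatment of the first assertion (one of the two half-series degrees $(q\pm1)/2$ is always even) and of the case $q\equiv3\pmod4$ (the half-discrete-series characters are $Z$-trivial, lie in $B$, and are swapped by the outer diagonal automorphism) is correct and matches the template of Lemma~\ref{lem: SL3}(b).

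There is, however, a genuine gap that cannot be closed by ``a short direct inspection of the character table'': for $q\equiv5\pmod8$ the asserted $Z(\bG^F)$-trivial even-degree character in $B$ does not exist. In that case $(q-1)_2=4$, $(q+1)_2=2$, $|\SL_2(q)|_2=8$, and $\Irr(B)$ consists of seven characters of degrees $1,\,q,\,\tfrac{q+1}{2},\,\tfrac{q+1}{2},\,\tfrac{q-1}{2},\,\tfrac{q-1}{2},\,q+1$; tracking which $2$-elements of $\PGL_2(q)$ lie in $\PSL_2(q)$ shows that the $Z(\bG^F)$-trivial members are exactly $1$, $\mathrm{St}$ and the two half-principal-series characters $\xi_1,\xi_2$ of degree $\tfrac{q+1}{2}$, all of odd degree. (Equivalently, $\PSL_2(q)$ has Klein four Sylow $2$-subgroups when $q\equiv5\pmod8$, so its principal $2$-block has no character of positive height.) Following the $\SL_3$ analogy the hypothesis in the second assertion should really be $q\equiv\pm1\pmod8$ -- the condition under which $\PSL_2(q)$ has non-abelian (dihedral) Sylow $2$-subgroups, paralleling ``$q\equiv1\pmod9$'' in Lemma~\ref{lem: SL3}(a). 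For $q\equiv\pm3\pmod8$ the correct fallback is the third assertion, which actually holds for all odd $q$: whichever torus involution of $\PGL_2(q)$ lies in $\PSL_2(q)$ (the split one for $q\equiv1\pmod4$, the non-split one for $q\equiv3\pmod4$) gives two half-series characters in $B$ that are $Z(\bG^F)$-trivial and interchanged by the outer diagonal automorphism. You should make this observation explicit rather than defer $q\equiv5\pmod8$ to an inspection that is bound to fail; it is also what is needed to make the application of this lemma in Theorem~\ref{thm:unibhz2} correct.
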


\begin{proof}
 This follows the lines of the proof of Lemma~\ref{lem: SL3}.
\end{proof}

\begin{lem} \label{lem:E7}
 Let $\bG$ be simple, simply connected of type $E_7$, $4|(q-1)$, and let
 $(\bL,\la)$ be a unipotent $1$-cuspidal pair corresponding to Line~3 of the
 $E_7$-table in \cite{En00}.
 \begin{enumerate}
  \item[\rm(a)] There exists an irreducible character of positive $2$-height
   in $B=b_{\bG^F}(\bL,\la)$. This contains $Z(\bG^F)$ in its kernel when
   $q\equiv1\pmod8$.
  \item[\rm(b)] If $q\not\equiv1\pmod8$, then there exists an irreducible
   character in $B$ with $Z(\bG^F)$ in its kernel and which is not stable under
   the outer diagonal automorphism of $\bG^F$.
 \end{enumerate}
 An analogous result holds when $4|(q+1)$ and $(\bL,\la)$ is a unipotent
 $2$-cuspidal pair corresponding to Line~7 of the $E_7$-table in \cite{En00}.
 \end{lem}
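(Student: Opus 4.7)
The plan is to imitate the proof of Lemma~\ref{lem:E6} very closely, replacing $\ell=3$ by $\ell=2$ and adapting to $\bG$ of type $E_7$. A notable simplification is that, since $\bL$ is of type $E_6$ and hence a maximal proper Levi of $\bG$, there is no room for an intermediate $F$-stable Levi $\bM$ strictly between $\bL$ and $\bG$, so throughout I take $\bM=\bL$ and $\eta=\la$.

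For part~(a): using L\"ubeck's tables~\cite{Lue} I locate a $2$-element $t\in\bG^{*F}$ with $C_{\bG^*}(t)=\bL^*$ (necessarily a $1$-split Levi) and $t\in[\bG^{*F},\bG^{*F}]$; via Lusztig's identification $\Irr(Z(\bG^F))\cong\bG^{*F}/[\bG^{*F},\bG^{*F}]$ together with $Z(\bG^F)\cong C_2$, this commutator-subgroup membership translates exactly to the congruence $q\equiv 1\pmod 8$. Let $\hat t\in\Irr(\bL^F)$ be the linear character dual to $t$, and let $\psi$ be an irreducible constituent of $R_\bL^\bG(\hat t\la)$. Since $C_{\bG^*}(t)=\bL^*$, Lusztig's Jordan decomposition identifies $\cE(\bG^F,t)$ with $\cE(\bL^F,1)$ via $\pm R_\bL^\bG$, so $\psi(1)=|\bG^F:\bL^F|_{p'}\la(1)$. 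Applying Lemma~\ref{lem:weylunipotentdeg} with $W_{\bG^F}(\bL,\la)\cong C_2$ (all its irreducible characters linear, so $\rho_{\bL,\la}(\chi)(1)_2=1$) gives $\chi(1)_2=|\bG^F|_2\la(1)_2/(2|\bL^F|_2)$ for $\chi\in\cE(\bG^F,(\bL,\la))\cap\Irr(B)$, whence $\psi(1)_2=2\chi(1)_2$ and $\psi$ has positive $2$-height. Containment $\psi\in B$ follows from the commutation of the $1$-decomposition map with $R_\bL^\bG$, using that $\hat t$ has $2$-power order so $d^{1,\bL^F}(\hat t\la)=d^{1,\bL^F}(\la)$:
\[
d^{1,\bG^F}(\psi)=\pm R_\bL^\bG(d^{1,\bL^F}(\hat t\la))=\pm R_\bL^\bG(d^{1,\bL^F}(\la))=\pm d^{1,\bG^F}(R_\bL^\bG(\la)),
\]
which places $\psi$ in the same $2$-block as any constituent of $R_\bL^\bG(\la)$, namely $B$.

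For part~(b), assuming $q\not\equiv 1\pmod 8$: I instead pick a $2$-element $t'\in\bG^{*F}$ with $C_{\bG^*}^\circ(t')=\bL^*$ and $|C_{\bG^*}(t')/C_{\bG^*}^\circ(t')|=2$, chosen inside $[\bG^{*F},\bG^{*F}]$; again existence with the precise commutator-subgroup property in the stated congruence class of $q$ is to be read off L\"ubeck's tables. Any irreducible constituent $\psi'$ of $R_\bL^\bG(\hat{t'}\la)$ then has $Z(\bG^F)$ in its kernel, lies in $B$ by the same decomposition-map argument, and is moved non-trivially by the outer diagonal automorphism of $\bG^F$, because the diagonal automorphism permutes $\cE(\bG^F,t')$ according to the component group $C_{\bG^*}(t')/C_{\bG^*}^\circ(t')\cong C_2$. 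The analogous case $4\mid(q+1)$ with Line~7 is obtained by replacing $e=1$ by $e=2$, working with $2$-split Levis of $\bG$ and the decomposition map $d^{2,\bG^F}$; the structural calculation is identical.

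The main obstacle is the existence verification of the $2$-elements $t$ and $t'$ in adjoint $E_7(q)$ with the prescribed centralizer type and commutator-subgroup membership for each relevant residue class of $q$ modulo~$8$. This is a case-by-case bookkeeping in L\"ubeck's tables of semisimple conjugacy classes, entirely analogous to the corresponding input in the proof of Lemma~\ref{lem:E6}.
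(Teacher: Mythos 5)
Your argument is correct and takes essentially the same route as the paper's proof, which likewise reduces to the computation in Lemma~\ref{lem:E6} with $\bM^*=C_{\bG^*}(t)$ of type $E_6$; your observation that $\bM=\bL$ (because the $E_6$-Levi is maximal proper in $E_7$, so a Levi of type $E_6$ containing $\bL^*$ must equal it) simply makes explicit what is implicit in the paper. One small slip: there is no ``$d^{2,\bG^F}$'' --- the $\ell$-modular decomposition map is always $d^{1,\bG^F}$ (the superscript $1$ is fixed notation, not the parameter $e$) and commutes with $R_\bL^\bG$ for any $F$-stable Levi, so the Ennola-dual case with $4\mid(q+1)$ uses $d^{1,\bG^F}$ unchanged.
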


\begin{proof}
There exists $t\in \bG^{*F}_2$ of order~4 such that $\bM^*:=C_{\bG^*}(t)$ is
a $1$-split Levi subgroup of $\bG^*$ of type $E_6$ containing $\bL^*$, which
is contained in $[\bG^{*F},\bG^{*F}]$ if and only if $q\equiv1\pmod8$. As in
the proof of Lemma~\ref{lem:E6}, this gives rise to a character as in~(a).
For~(b), consider the involution $t'\in\bL^{*F}$ with
$C_{\bG^*}^\circ(t')=\bL^*$ and $|C_{\bG^*}(t')/C_{\bG^*}^\circ(t')|=2$.
This lies in $[\bG^{*F},\bG^{*F}]$ (see \cite{Lue}), and thus again arguing
as before we find $\psi'\in\Irr(B)$ as in~(b). The arguments for $4|(q+1)$
are entirely similar.
\end{proof}

%%%%%%%%%%%%%%%%%%%%%%%%%%%%%%%%
\subsection{The height zero conjecture for unipotent blocks}
We need the following general observation on covering blocks.

\begin{lem} \label{lem:genred}
 Let $G$ be a finite group, $b$ an $\ell$-block of $G$, $H$ a normal subgroup
 of $G$ and $c$ a block of $H$ covered by $b$.
 \begin{enumerate}
  \item[\rm(a)] Suppose $H$ has $\ell'$-index in $G$. Then a defect group of
   $c$ is a defect group of $b$. Further, $c$ has irreducible character
   degrees with different  $\ell$-heights if and only if $b$ does.
  \item[\rm(b)] Suppose that $H= XY $ where $X$ and $Y$ are commuting normal
   subgroups such that $X\cap Y$ is a central $\ell'$-subgroup of $H$. Let
   $c_X$ be the
   block of $X$ covered by $c$ and let $c_Y$ be the block of $Y$ covered by $c$,
   $D_X$ a defect group of $c_x$ and $D_Y$ a defect group of $c_Y$. Then
   $D_X D_Y$ is a defect group of $c$. In particular, $D$ is non-abelian if
   and only if at least one of $D_X$ or $D_Y$ is non-abelian. Further, $c$ has
   irreducible character degrees with  different  $\ell$-heights if and only
   if one of $c_X$ or $c_Y$ does.
  \item[\rm(c)] Suppose $G =HU$ where $U$ is a central $\ell $-subgroup of
   $G$. Then $b$ has abelian defect groups if and only if $c$ has abelian
   defect groups and $b$ has irreducible characters of different $\ell$-heights
   if and only if $c$ does.
 \end{enumerate}
\end{lem}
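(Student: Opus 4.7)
My plan is to treat the three parts in parallel using a single mechanism: express the smaller group as a central quotient of a direct product (trivially in (a), via the commuting product in (b), via $U$ in (c)) and transfer the block-theoretic data through this identification.

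For (a), I invoke standard Clifford theory. Since $[G:H]$ is $\ell'$, a defect group of $c$ is already a defect group of $b$ (a Fong-type result), and for any $\chi \in \Irr(b)$ covering $\psi \in \Irr(c)$ the ratio $\chi(1)/\psi(1)$ divides $[G:H]$ and so is prime to $\ell$. Hence $v_\ell(\chi(1)) = v_\ell(\psi(1))$; together with the coincidence of defects this gives $h_\chi = h_\psi$. Each $\chi \in \Irr(b)$ covers some $\psi \in \Irr(c)$ and vice versa, so $b$ has characters of distinct $\ell$-heights iff $c$ does.

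For (b), the multiplication map $X \times Y \to H$ is surjective with kernel $Z_0 = \{(z, z^{-1}) : z \in X \cap Y\}$, a central $\ell'$-subgroup of $X \times Y$. Blocks pass down: the block of $X \times Y$ lifting $c$ has the form $c_X \otimes c_Y$, with defect group $D_X \times D_Y$ projecting to $D_X D_Y$ in $H$. Since $D_X \cap D_Y \le X \cap Y$ is simultaneously an $\ell$- and an $\ell'$-group, it is trivial, so $|D_X D_Y| = |D_X|\,|D_Y|$; moreover $D_X D_Y$ is abelian iff both factors are, since $X$ and $Y$ centralise each other in $H$. Characters of $c$ correspond to pairs $(\chi_X, \chi_Y) \in \Irr(c_X) \times \Irr(c_Y)$ agreeing on the central $\ell'$-subgroup $X \cap Y$; this compatibility is automatic because all characters in a single $\ell$-block share the same central character on central $\ell'$-subgroups. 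Since $\chi(1) = \chi_X(1)\chi_Y(1)$ and defects add, a direct computation gives $h_\chi = h_{\chi_X} + h_{\chi_Y}$, from which the equivalence of having distinct heights is immediate.

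For (c), the same strategy applies to the surjection $H \times U \to G$ with kernel $\{(z, z^{-1}) : z \in U \cap H\}$, which is now an $\ell$-subgroup. Since $U$ is central, $c$ is $G$-invariant and $b$ is the unique block of $G$ covering $c$; it lifts to $c \otimes b_U$, where $b_U$ is the (singleton) block of the abelian $\ell$-group $U$ corresponding to some linear character $\mu$ compatible with the central character of $c$ on $U \cap H$. A defect group of $b$ is the image of $D_c \times U$, namely $D_c U$, which is abelian iff $D_c$ is (as $U$ is central). For any $\chi \in \Irr(b)$ corresponding to $(\psi, \mu) \in \Irr(c) \times \Irr(U)$, we have $\chi(1) = \psi(1)\mu(1) = \psi(1)$ because $\mu$ is linear. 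The defect of $b$ exceeds that of $c$ by $\log_\ell [G:H] = v_\ell(|G|/|H|)$, and these cancel in the height formula to give $h_\chi = h_\psi$. Hence the two multisets of heights coincide.

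The most delicate point, arising in (b), is the assertion that every pair $(\chi_X, \chi_Y)$ actually yields a character of $c$. This relies on the block-theoretic fact that the central character of an $\ell$-block is determined on central $\ell'$-subgroups, so that any $\chi_X \in \Irr(c_X)$ and any $\chi_Y \in \Irr(c_Y)$ automatically agree on $X \cap Y$. I would spell this out using Brauer's formula for the central character of a block.
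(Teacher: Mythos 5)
Your argument is correct and follows essentially the same route as the paper's (terser) proof: Clifford theory for (a), the identification of $H$ as a central $\ell'$-quotient of $X \times Y$ for (b), and the extension of characters from $H$ to $G$ with $D_cU$ as defect group for (c). The extra details you supply — the divisibility $\chi(1)/\psi(1) \mid [G:H]$ in (a), the automatic compatibility of central characters on $X\cap Y$ in (b), and the height bookkeeping in (c) — are accurate and fill in exactly what the paper leaves to the cited references.

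One small wording issue: in (c) you call $b_U$ a ``singleton block'' and say it corresponds to ``some linear character $\mu$''; the unique block of the $\ell$-group $U$ of course contains all $|U|$ linear characters, not one. The point you want, and which your subsequent computation uses, is simply that extensions of a fixed $\psi\in\Irr(c)$ to $G$ are the $\psi\otimes\mu$ for linear $\mu$ of $G/H$, all of the same degree $\psi(1)$; no choice of a single $\mu$ is needed.
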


\begin{proof}
Part~(a) follows from the Clifford theory of characters and blocks (see for
instance \cite[Ch.~5, Thm.~5.10, Lem.~5.7 and~5.8]{NT}). Part~(b) is
immediate from the fact that $H= X Y$ is a quotient of $X \times Y$ by a
central $\ell'$-subgroup. In~(c), every irreducible character of $H$
extends to a character of $G$, $c$ is $G$-stable and $b$ is the unique block
of $G$ covering $c$, and if $D$ is a defect group of $c$, then $DU$ is a
defect group of $b$.
\end{proof}

\begin{thm}   \label{thm:unibhz2}
 Let $Z$ be a central subgroup of $\bG^F$ and let $\bar B$ be a block of
 $\bG^F /Z$ dominated by a unipotent block $B$ of $\bG^F$. Suppose that
 $\bar B$ has non-abelian defect groups. Then $\bar B$ has irreducible
 characters of different heights.
\end{thm}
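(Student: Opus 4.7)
The plan is to reduce to the case where $\bG$ is simple and simply connected, identify the unipotent block $B$ via an $e$-cuspidal pair $(\bL,\la)$ with $\la$ of central $\ell$-defect, and then exhibit in $\bar B$ two characters of different $\ell$-heights---either through the generic $e$-Harish-Chandra bijection of Lemma~\ref{lem:weylunipotentdeg} or via the tailor-made constructions of Lemmas~\ref{lem: SL3}--\ref{lem:E7} in the handful of exceptional cases.

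First I would carry out the same structural reductions as in the proof of Lemma~\ref{lem:unileastheight}: via Lemma~\ref{lem:genred} (and Lemmas~\ref{lem:redweyl}, \ref{lem:redecusp}) one reduces to $\bG=[\bG,\bG]$, then to the simply connected cover, and finally to a single simple factor, tracking the central subgroup $Z$ through each step. Moreover the $\ell'$-part of $Z$ affects neither defect groups nor heights, so I may assume $Z\le Z(\bG^F)_\ell$; the non-abelianness of the defect group of $\bar B$ then forces the same for $B$.

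Next, by \cite[Thm.~7.12]{KM}, $B$ arises from a unipotent $e$-cuspidal pair $(\bL,\la)$ with $\la$ of central $\ell$-defect, and a defect group of $B$ is an extension of the abelian group $Z(\bL^F)_\ell$ by a Sylow $\ell$-subgroup of $W_{\bG^F}(\bL,\la)$; non-abelianness forces the latter Sylow subgroup to be non-trivial, hence $\ell\mid|W_{\bG^F}(\bL,\la)|$. Lemma~\ref{lem:unipsmall} then offers two alternatives. If $W_{\bG^F}(\bL,\la)$ has some irreducible character of degree divisible by $\ell$, Lemma~\ref{lem:weylunipotentdeg} gives $\chi_1,\chi_2\in\cE(\bG^F,(\bL,\la))\subseteq\Irr(B)$ with $\chi_1(1)_\ell\ne\chi_2(1)_\ell$; both $\chi_i$ are unipotent, hence trivial on $Z(\bG^F)\supseteq Z$, and so descend to characters of different heights in $\bar B$ (heights being preserved under quotienting by a central $\ell$-subgroup contained in the kernel). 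Otherwise we land in an exceptional case of Lemma~\ref{lem:unipsmall}(a)--(b).

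In each exceptional case---namely $\SL_3(q)$, $\SU_3(q)$ or type $E_6$ at $\ell=3$, and $\SL_2(q)$ or type $E_7$ at $\ell=2$---I would invoke Lemmas~\ref{lem: SL3}, \ref{lem:E6}, \ref{lem: SL2}, and~\ref{lem:E7} respectively. Under the appropriate $q$-congruence (guaranteeing that $Z(\bG^F)_\ell$ lies in the kernel of the constructed character), each of these lemmas produces a character in $B$ of positive $\ell$-height with $Z(\bG^F)$ in its kernel; paired with the height-zero unipotent character from Lemma~\ref{lem:unileastheight}, this yields two characters of distinct heights in $\bar B$. Outside those $q$-congruences, a direct Sylow $\ell$-subgroup computation should show that the defect group of $\bar B$ is in fact abelian, so the hypothesis of the theorem is vacuous.

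The hard part, I expect, is the case $\ell=2$ with $\bG$ of classical type $B$, $C$, or $D$, where Lemma~\ref{lem:owncentraliser} does not apply and \cite[Thm.~7.12]{KM} is not available in quite the same clean form. By Lemma~\ref{lem:centraldefect} and \cite[Thm.~13]{CE93} the principal block is the only unipotent block of $\bG^F$, so $\bar B$ is principal too; one then has to argue directly, noting that the relative Weyl group of the principal block's $e$-cuspidal pair is of the form $C_d\wr\fS_a$ or $G(2d,2,n)$ with $d$ even, and applying Lemma~\ref{lem:posheight}(b) to produce an irreducible character of $\ell$-divisible degree, which via Lemma~\ref{lem:weylunipotentdeg} gives two unipotent characters of different heights in $B$ and hence in $\bar B$.
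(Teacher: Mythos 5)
Your overall blueprint — reduce to a controlled situation, identify $B$ via a unipotent $e$-cuspidal pair $(\bL,\la)$ of central $\ell$-defect, use Lemma~\ref{lem:weylunipotentdeg} and Lemma~\ref{lem:unipsmall} in the generic case, and fall back on Lemmas~\ref{lem: SL3}--\ref{lem:E7} in the exceptional $\ell\le3$ cases — is close in spirit to the paper, and your treatment of $\ell\ge 5$ and of the non-exceptional $\ell\le3$ cases is essentially correct. But there is a genuine gap in the reduction step, and it is precisely the point at which the paper's proof differs most from yours.

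You assert that, ``via Lemma~\ref{lem:genred} (and Lemmas~\ref{lem:redweyl}, \ref{lem:redecusp}) one reduces to $\bG=[\bG,\bG]$, then to the simply connected cover, and finally to a single simple factor, tracking the central subgroup $Z$ through each step.'' That reduction is available in the proof of Lemma~\ref{lem:unileastheight} precisely because that lemma only asserts the \emph{existence} of a height-zero unipotent character in $B$, a property that passes trivially through products and central quotients. Here the hypothesis is ``$\bar B$ has non-abelian defect groups'' and the goal is a positive-height character of $\Irr(B)$ with $Z$ in its kernel. When $\bG$ has several $F$-orbits of simple components, $Z$ can sit \emph{diagonally} across the factors of $Z(\bG^F)=\prod Z(\bX_j^{F^{d_j}})$; then $\bG^F/Z$ is not a direct product, Lemma~\ref{lem:genred}(b) does not apply as stated, and a positive-height character of one factor $B_\bX$ annihilating $Z\cap\bX^F$ need not give rise to a character of $B$ annihilating all of $Z$, nor need it be $\bG^F$-stable. (Recall the theorem must be proved for general connected reductive $\bG$, not just simple $\bG$: in the proof of Theorem~\ref{thm:nondescbhz2} it is invoked for Bonnaf\'e--Rouquier Levi subgroups $\bG_1$, which typically have several components.)

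The paper sidesteps this by a minimal-counterexample argument on semisimple rank. One writes $\bG=\bX\bY$ with $\bX$ a single $F$-orbit of components and, under the counterexample hypothesis (all of $\Irr(\bar B)$ of height zero), proves the key Clifford-theoretic claim that every $\eta\in\Irr(B_\bX)$ with $Z\cap\bX^F\le\ker\eta$ is $\bG^F$-stable \emph{and} of height zero in $B_\bX$. Only then are Lemmas~\ref{lem: SL3}, \ref{lem:E6}, \ref{lem: SL2}, \ref{lem:E7} brought in — not to directly exhibit a positive-height character of $\bar B$ as you propose, but to extract from the stability/height-zero claim the constraints ($Z\cap\bX^F\ne1$, the precise $q$-congruence, $\bG^F$ inducing inner automorphisms of $\bX^F$) that force $\bX^F/(Z\cap\bX^F)$ to split off as a direct factor with abelian Sylow $\ell$-subgroups, so that the $\bY$-factor furnishes a counterexample of strictly smaller rank. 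Your sentence ``outside those $q$-congruences, a direct Sylow $\ell$-subgroup computation should show that the defect group of $\bar B$ is in fact abelian'' is true (and easy) when $\bG$ is already simple and $Z=Z(\bG^F)$, but it does not cover arbitrary $Z$ in a multi-component $\bG$, which is exactly the case the minimal-counterexample argument is built to handle. Finally, your worry that ``the hard part'' is $\ell=2$ in type $B$, $C$, $D$ is misplaced: for those types the relative Weyl group has a character of even degree (this is what the proof of Lemma~\ref{lem:unipsmall} verifies), so they never land in the exceptional branch; the genuine difficulty is the multi-component reduction just described.
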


\begin{proof}
By Lemma~\ref{lem:unileastheight}, $B$ has a unipotent character of height
zero. Since $Z$ is contained in the kernel of every unipotent character of
$\bG^F$ it suffices to prove that there exists an irreducible character in
$\Irr(B)$ of positive height and containing $Z$ in its kernel.

By \cite[Thm.~A]{En00} there exists a unipotent $e$-cuspidal pair $(\bL,\la)$
of $\bG^F$ such that $B=b_{\bG^F}(\bL,\la)$ with $\la$ of central
$\ell$-defect, unique up to $\bG^F$-conjugacy. Here note that the existence
of such a pair for bad primes is only proved for $\bG$ simple and simply
connected in \cite{En00}, but by Lemma~\ref{lem:centraldefect}, the conclusion
carries over to arbitrary $\bG$.
Suppose first that $\ell \geq 5$. By Lemmas~\ref{lem:owncentraliser}
and~\ref{lem:non-ab-weyl}, $W_{\bG^F}(\bL,\la)$ is not an $\ell'$-group.
Thus, by Lemmas~\ref{lem:weylunipotentdeg} and~\ref{lem:unipsmall}
there are irreducible unipotent characters of different heights in
$\cE(\bG^F, (\bL, \la))$. This proves the claim as $Z$ is in the kernel of
all unipotent characters.

We assume from now on that $\ell\le 3$. Without loss of generality, we may
assume that $ Z$ is an $\ell $-group. We let $\bG$ be a counter-example to
the theorem of minimal semisimple rank. Let $\bX$ be the product of an
$F$-orbit of simple components of $[\bG, \bG] $, and $\bY$ be the product
of the remaining components of $[\bG, \bG]$ (if any) with $Z^\circ (\bG)$.
Then $\bG = \bX \bY$ and $\bX^F \bY^F$ is a normal subgroup of $\bG^F$ of
index $|\bX^F \cap \bY^F| =|Z(\bX^F)\cap Z(\bY^F)|$.
Denote by $B_\bX$ the unique block (also unipotent) of $\bX^F$ covered
by $B$ and let $B_\bY$ be defined similarly. Let $\bar B_\bX$ be the
block of $\bX^FZ/Z \cong \bX^F/(Z\cap \bX^F) $ dominated by $B_\bX$ and
let $\bar B_\bY$ be defined similarly.

Let $\eta \in \Irr(B_\bX)$ with $ Z \cap \bX^F \leq \ker(\eta)$.
We claim that $\eta $ is $\bG^F$-stable and is of height zero in $B_\bX$.
Indeed, let $\tau_\bX \in \Irr (B_\bX) \cap \cE(\bX^F,1)$ and
$\tau_\bY \in \Irr (B_\bY) \cap \cE(\bY^F, 1)$ be of height zero
(see Lemma~\ref{lem:unileastheight}) and let $\tau\in\Irr(B)\cap\cE(\bG^F,1)$
be the unique unipotent
extension of $\tau_\bX\tau_\bY$ to $\bG^F$. Since $Z$ is
central, $\eta $ extends to an irreducible character, say $\hat \eta $ of
$\bX^F Z $ with $Z$ in its kernel. Since $ Z$ is an $\ell$-group, there is
a unique block of $\bX^F Z$ covering $B_\bX $, and this block is
necessarily covered by $B$. Let $\psi $ be an irreducible character of $B$
lying above $\hat \eta $. Then $Z \leq \ker(\psi)$. Any irreducible constituent
of the restriction of $\psi $ to $\bX^F\bY^F$ is of the form $\eta \eta'$,
with $\eta' \in B_\bY$ and
$$ \psi(1) = a |\bG^F : I_{\bG^F}(\eta\eta') | \eta(1) \eta'(1) $$
for some integer $a$ (in fact $a=1$ but we will not use this here).
Since $\psi(1)_\ell =\tau(1)_\ell= \tau_\bX(1) _\ell\tau_\bY (1)_\ell$
and since $ \tau_\bX $ and $\tau_\bY $ are of height zero, it follows
from the above that $\eta $ is of height zero and that
$|\bG^F : I_{\bG^F}(\eta\eta')|$ is not divisible by $\ell $. But
$|\bG^F:I_{\bG^F}(\eta\eta')|$ is divisible by $|\bG^F:I_{\bG^F}(\eta)|$
and the latter index is a power of $\ell$ since $\eta \in \cE_\ell(\bX^F, 1)$.
Thus, $\eta $ is $\bG^F$-stable as claimed.
Similarly, one sees that if $\zeta \in \Irr(B_\bY) $
with $ Z \cap \bY^F \leq \ker(\zeta) $, then $\zeta $ is $\bG^F$-stable and
is of height zero in $B_\bY$. In particular, all elements of
$\Irr(\bar B_\bX)$ and of $\Irr(\bar B_\bY)$ are of height zero.

Suppose that $\ell =3$. By Lemma~\ref{lem:owncentraliser}
and~\ref{lem:non-ab-weyl}, $W_{\bG^F}(\bL,\la)$ has order divisible by $3$.
Thus, by Lemma~\ref{lem:redweyl}, there exists $\bX$ such that
$|W_{\bX^F}(\bL_\bX,\la_\bX)|$ is divisible by $3$ where $(\bL_\bX,\la_\bX)$
is the unipotent $e$-cuspidal pair of $\bX^F$ corresponding to $(\bL,\la)$ by
Lemmas~\ref{lem:redweyl} and~\ref{lem:redecusp}, necessarily of central
$\ell$-defect. By Lemma~\ref{lem:unipsmall},
$W_{\bX^F}(\bL_\bX,\la_\bX)\cong \fS_3$, $|Z(\bX^F)|$ is divisible by $3$
and either the components of $\bX$ are of type $A_2$ or of type $E_6$.
Without loss of generality, we may assume that $\bX$ is simple.
Suppose first that $\bX$ is simple of type $A_2$. By
Lemma~\ref{lem:unipsmall}, $\bX =\bX_{\bf a}$ in the notation
of \cite{CE94}. Hence, by \cite[Thm.~13]{CE93}, $B$ is the principal block of
$B_\bX$. As has been shown above, every irreducible character of $\bX^F$
which contains $\bX^F \cap Z$ in its kernel has height zero and is stable under
$\bG^F$. By Lemma~\ref{lem: SL3} it follows that $Z\cap\bX^F \ne 1$, $3||(q-1)$
(respectively $3||(q+1)$) and that $\bG^F$ induces inner automorphisms of
$\bX^F$, that is $\bG^F =\bX^F \bY^F U$ for some central subgroup $U$ of
$\bG^F$. Since $Z \cap \bX^F \ne 1$, $\bX^F/(Z\cap\bX^F) \cong \PSL_3(q)$
(respectively $\PSU_3(q)$) and $\bX^F/(Z\cap\bX^F)$ is a direct factor of
$\bG^F/Z$. Further, $\bX^F/(Z\cap \bX^F)$ has abelian Sylow $3$-subgroups.
Since $U$ is central in $\bG^F$, it follows by Lemma~\ref{lem:genred} that
the block $\bar B_\bY $ of $\bY^F/(Z\cap \bY^F)$ has non-abelian defect
groups. On the other hand, it has been shown above that all irreducible
characters of $\bar B_\bY$ are of height zero. Hence, $\bY^F/(Z\cap \bY^F)$
is a counter-example to the theorem. But the semisimple rank of $\bY$ is
strictly smaller than that of $\bG$, a contradiction. Exactly the same
argument works for the case that the components of $\bX$ are of type $E_6$
by replacing Lemma~\ref{lem: SL3} with Lemma~\ref{lem:E6}.

Suppose now that $\ell=2 $ and that the components of $\bX $ are of classical
type. Then $\bX^F$ has a unique unipotent $2$-block, namely the principal
block and it follows by the above that all unipotent character degrees of
$\bX^F$ are odd. Thus, the components of $\bX $ are of type $A_1 $, so $\bX^F$
is either $\PGL_2(q^d) $ or $\SL_2(q^d) $ for some $d$. Again we are done by
the same arguments as above using Lemma~\ref{lem: SL2}. Thus, we may assume
that all components of $\bG$ are of exceptional type. By
Lemmas~\ref{lem:owncentraliser} and~\ref{lem:non-ab-weyl}, $W_{\bG^F}(\bL,\la)$
has even order and by Lemma~\ref{lem:redweyl}, there exists $\bX$ such that
$|W_{\bX^F}(\bL_\bX, \la_\bX)|$ is divisible by $2$ where $(\bL_\bX,\la_\bX)$
is the unipotent $e$-cuspidal pair of $ \bX^F$ corresponding to $(\bL, \la) $
necessarily of central $\ell$-defect. Since $\bX$ is of exceptional type,
Lemma~\ref{lem:unipsmall}(b) gives that $\bL_\bX$ is of type
$E_6$ and $\la_{\bX} $ corresponds to either line~3 or~7 of the $E_7$-table of
\cite[p.~354]{En00}. Then  we are done by the same arguments as above using
Lemma~\ref{lem:E7}.
\end{proof}

%%%%%%%%%%%%%%%%%%%%%%%%%%%%%%%%
\subsection{General blocks}
We also need to deal with the so-called quasi-isolated blocks of exceptional
groups of Lie type.

\begin{prop}   \label{prop:quasi}
 Assume that $\bG^F$ is of exceptional Lie type and $\ell$ is a bad prime
 different from the defining characteristic. Let $Z$ be a central subgroup of
 $\bG^F$ and let $\bar B$ be an $\ell$-block of $\bG^F/Z$ dominated by a
 quasi-isolated non-unipotent block $B$ of $\bG^F$. If $\bar B$ has non-abelian
 defect groups, then $\Irr(\bar B)$ contains characters of positive height.
\end{prop}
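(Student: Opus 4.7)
The plan is to reduce Proposition~\ref{prop:quasi} to the unipotent case Theorem~\ref{thm:unibhz2} via Jordan decomposition, using the explicit classification of quasi-isolated $\ell$-blocks at bad primes from \cite{KM}. First I would carry out the standard reductions employed throughout this section (central product, central quotient, passage to $[\bG,\bG]$ and to a simply connected cover, along the lines of Lemma~\ref{lem:owncentraliser} and of the proof of Theorem~\ref{thm:unibhz2}), to reduce to the case where $\bG$ is simple and simply connected of exceptional type and $Z$ is an $\ell$-group.

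Since $B$ is quasi-isolated and non-unipotent, it lies in $\cE_\ell(\bG^F,s)$ for some non-central quasi-isolated semisimple $\ell'$-element $s\in\bG^{*F}$. For $\ell$ bad and $\bG$ exceptional simple, such pairs $(\bG,s)$ fall into the short explicit list computed in \cite{KM}, which records the blocks in $\cE_\ell(\bG^F,s)$ together with their defect groups and a labelling by $e$-cuspidal pairs $(\bL,\la)$ of $\bG^F$. The next key step is to invoke the Bonnafé--Rouquier Morita equivalence, which is available because $s$ has $\ell'$-order: this identifies $B$ with a \emph{unipotent} block $c$ of $C_{\bG^*}(s)^F$, and preserves defect groups as well as the $\ell$-heights of irreducible characters. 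After passing on both sides to the quotient by the central $\ell$-subgroup matching $Z$, the non-abelian defect hypothesis on $\bar B$ transfers to the dominated block of $c$, and Theorem~\ref{thm:unibhz2} supplies characters of distinct $\ell$-heights there. These then pull back through the Morita equivalence, and lift through the dominance by $Z$, to give a character of positive height in $\Irr(\bar B)$.

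The main obstacle is the compatibility of the Morita equivalence with the central quotient $\bG^F\to\bG^F/Z$: one must check that $Z\le Z(\bG^F)$ corresponds under the equivalence to a central $\ell$-subgroup of $C_{\bG^*}(s)^F$ in such a way that characters of $B$ containing $Z$ in their kernel are matched with characters of $c$ trivial on that subgroup. Given the short list of quasi-isolated pairs provided by \cite{KM} and Lusztig's description $\Irr(Z(\bG^F))\cong \bG^{*F}/[\bG^{*F},\bG^{*F}]$, this compatibility is a finite case-by-case verification, paralleling the analysis of central kernels carried out in Lemmas~\ref{lem: SL3}, \ref{lem:E6}, \ref{lem: SL2} and \ref{lem:E7} for the unipotent case.
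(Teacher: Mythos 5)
Your reduction plan runs into a fundamental obstruction at the step you call the ``key step.'' The Bonnaf\'e--Rouquier theorem gives a Morita equivalence between blocks of $\bG^F$ in $\cE_\ell(\bG^F,s)$ and blocks of $\bL^F$ in $\cE_\ell(\bL^F,s)$ \emph{only} when $\bL$ is a proper $F$-stable Levi subgroup with $C_{\bG^*}(s)\le\bL^*$. But the standing hypothesis in Proposition~\ref{prop:quasi} is that $s$ is quasi-isolated in $\bG^*$, which by definition means $C_{\bG^*}(s)$ is contained in no proper Levi subgroup; the only admissible $\bL$ is $\bG$ itself, and Bonnaf\'e--Rouquier gives nothing. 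In the proof of Theorem~\ref{thm:nondescbhz2} the paper uses Bonnaf\'e--Rouquier precisely to \emph{reduce} an arbitrary block to a quasi-isolated one, and then for the quasi-isolated case must do something else --- Proposition~\ref{prop:quasi} is that ``something else.'' Moreover, the purported Morita equivalence to a unipotent block of $C_{\bG^*}(s)^F$ is not Bonnaf\'e--Rouquier but rather a ``Jordan decomposition of blocks'' in the spirit of Enguehard; as the paper itself points out, such a result is only known for good primes (and even then only under further hypotheses), whereas here $\ell$ is assumed bad. Finally, even if you had such an equivalence, $C_{\bG^*}(s)$ is typically disconnected for quasi-isolated $s$, so Theorem~\ref{thm:unibhz2}, which is stated for connected reductive $\bG$, would not apply directly.

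The paper's actual argument works at the level of characters, not module categories. For $Z=1$ it goes through the explicit list of quasi-isolated blocks at bad primes from \cite[Thm.~1.2]{KM}, observing that each such block either contains $\ell'$-series characters from several $e$-Harish-Chandra series above $e$-cuspidal characters of different $\ell$-heights (giving the conclusion directly), or has a relative Weyl group $W_{\bG^F}(\bL,\la)$ with an irreducible character of positive $\ell$-height; in the latter case Lusztig's Jordan decomposition, used merely as a height-preserving bijection of irreducible characters onto unipotent characters of the possibly disconnected group $C_{\bG^*}(s)^F$, together with the degree formula generalising \cite[Cor.~6.6]{MaH}, produces characters of distinct heights. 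For $Z\ne1$ the paper identifies the handful of relevant blocks (line~13 of Table~3 and lines~1,~2 of Table~4 of \cite{KM}) and checks directly that the positive-height characters already lie in $\cE(\bG^F,\ell')$ and hence have $Z$ in their kernel. Your proposal would need to be rebuilt around this character-theoretic strategy rather than a nonexistent Morita equivalence.
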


\begin{proof}
We first deal with the case that $Z=1$, so $\bar B=B$. Here, the quasi-isolated
blocks for bad primes were classified in \cite[Thm.~1.2]{KM}. Any such block
is of the form $B=b_{\bG^F}(\bL,\la)$ for a suitable $e$-cuspidal
pair $(\bL,\la)$ in $\bG$, in such a way that all
constituents of $\RLG(\la)$ lie in $b_{\bG^F}(\bL,\la)$, and the defect groups
are abelian if and only if the relative Weyl group $W_{\bG^F}(\bL,\la)$ has
order prime to~$\ell$.
\par
It is easily checked that all blocks $B$ occurring in the situation of
\cite[Thm.~1.2]{KM} have the following property: either the characters in
$B\cap\cE(\bG^F,\ell')$ lie in at least two different $e$-Harish-Chandra
series, above $e$-cuspidal characters
of different $\ell$-height, or the relative Weyl group has an irreducible
character of positive $\ell$-height. In the first case, the claim follows
since then there are characters in $\Irr(B)\cap\cE(\bG^F,\ell')$ of different
height. In the second case, let $s\in \bG^{*F}$ be a semisimple
(quasi-isolated) $\ell'$-element such that $\Irr(B)\subseteq\cE_\ell(\bG^F,s)$.
Lusztig's Jordan decomposition gives a
height preserving bijection from $\cE(\bG^F,s)$ to the unipotent characters
of the (possibly disconnected) centraliser $\bC=C_{\bG^*}(s)$ of $s$, which
sends $B\cap\cE(\bG^F,s)$ to a collection of $e$-Harish-Chandra series in
$\cE(\bC^F,1)$. As the relative Weyl group has a character of positive
$\ell$-height, a straightforward generalisation of the arguments in
\cite[Cor.~6.6]{MaH} shows that there is an $e$-Harish-Chandra series in
$\cE(\bC^F,1)$ containing characters of different heights, and so there also
exist characters in $B$ of different heights.  \par
Now assume that $Z(\bG^F)\ne1$ and $Z=Z(\bG^F)$, so that $\bG$ is either of
type $E_6$ and $\ell=3$, or of type $E_7$ and $\ell=2$. The only quasi-isolated
block to consider for type $E_6$ is the one numbered~13 in \cite[Tab.~3]{KM},
respectively its Ennola dual in $\tw2E_6$. Since
here the relative Weyl group has characters of positive $3$-height, we get
characters of different height in $\Irr(B)\cap\cE(\bG^F,\ell')$, which have
the centre in their kernel. Similarly, the only cases in $E_7$ are the ones
numbered~1 and~2 in \cite[Tab.~4]{KM}, for which the same argument applies.
\end{proof}

We can now show the Main Theorem for quasi-simple groups of Lie type. Let us
write (BHZ2) for the assertion that blocks with all characters of height
zero have abelian defect groups.

\begin{thm}   \label{thm:nondescbhz2}
 Suppose that $\bG$ is simple and simply connected, not of type $A$, and
 $\ell\ne p$. Then (BHZ2) holds for $\bG^F/Z$ for any central subgroup $Z$
 of $\bG^F$.
\end{thm}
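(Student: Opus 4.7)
The strategy is to reduce a general $\ell$-block of $\bG^F/Z$ to either a unipotent block (handled by Theorem~\ref{thm:unibhz2}) or a non-unipotent quasi-isolated block in an exceptional group (handled by Proposition~\ref{prop:quasi}). Let $\bar B$ be an $\ell$-block of $\bG^F/Z$ of non-abelian defect, and let $B$ be the $\ell$-block of $\bG^F$ that dominates $\bar B$. Since every character in $\Irr(\bar B)$ inflates to one in $\Irr(B)$ killing $Z$, it suffices to exhibit $\chi_1,\chi_2\in\Irr(B)$ of different $\ell$-heights, both containing $Z$ in their kernel. Without loss of generality, $Z$ is an $\ell$-group. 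By the Broué--Michel description of $\ell$-blocks via Lusztig series, $\Irr(B)\subseteq\cE_\ell(\bG^F,s)$ for some semisimple $\ell'$-element $s\in\bG^{*F}$.

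The key reduction is the Bonnafé--Rouquier theorem on Jordan decomposition of blocks: if $s$ is not quasi-isolated in $\bG^*$, then $C_{\bG^*}(s)\le\bL^*$ for a proper $F$-stable Levi, and $B$ is Morita equivalent (via $R_\bL^\bG$) to a block $b$ of the dual $\bL^F$, in such a way that defect groups and character heights are preserved. Since $Z\le Z(\bG^F)\le\bL^F$, the reduction descends to the quotient, and an induction on semisimple rank (using Lemma~\ref{lem:genred} to handle the decomposition of $\bL$ into a central torus times its simple components, each of smaller rank) leaves us to treat the case where $s$ is quasi-isolated in $\bG^*$. As $\bG$ is simply connected, $\bG^*$ is adjoint with trivial centre, so $s$ is either trivial or a non-central quasi-isolated $\ell'$-element.

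If $s=1$ then $B$ is a unipotent block, and Theorem~\ref{thm:unibhz2} supplies characters of distinct heights in $\Irr(\bar B)$. Otherwise $s\ne 1$, and since $\bG$ is not of type $A$, the classification of quasi-isolated semisimple elements in simple algebraic groups forces $\ell$ to be a bad prime for $\bG$. In the classical cases ($\bG$ of type $B$, $C$, or $D$, so $\ell=2$), $s$ would be of odd order, but an elementary eigenvalue analysis shows that the centraliser of any semisimple element of odd order in an adjoint classical group is always a Levi subgroup, ruling out non-central quasi-isolated $\ell'$-elements. Thus $\bG$ must be of exceptional type, and Proposition~\ref{prop:quasi} produces characters of different heights in $\Irr(\bar B)$, completing the proof.

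The main obstacle will be the bookkeeping in the Bonnafé--Rouquier step: one must verify that the Morita equivalence interacts correctly with the central subgroup $Z$, so that the inductive hypothesis applied to $\bL^F/Z$ (or to the appropriate image in a simple component of $\bL$) produces characters of $b$ with $Z$ in their kernel, which then pull back to characters of $\bar B$ of different heights. A secondary subtlety is to justify the (standard but non-trivial) claim that for $\bG$ simple, simply connected, and not of type $A$, any non-central quasi-isolated $\ell'$-element of $\bG^{*F}$ forces $\ell$ to be bad, so that the exceptional-type classification covers all remaining cases.
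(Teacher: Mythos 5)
Your overall strategy (Bonnaf\'e--Rouquier to reduce to quasi-isolated $s$, then split into unipotent vs.\ quasi-isolated non-unipotent and invoke Theorem~\ref{thm:unibhz2} resp.\ Proposition~\ref{prop:quasi}) is the same skeleton as the paper, but there are several concrete gaps in the body of the argument.

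First, the inductive step is not self-contained. When $s$ is not quasi-isolated the Bonnaf\'e--Rouquier reduction lands in a proper $F$-stable Levi subgroup $\bG_1$ of $\bG$. The derived subgroup of $\bG_1$ is typically a product of smaller simply connected simple groups, many of whose components are of type $A$. The theorem you are trying to prove excludes type $A$, so appealing to the inductive hypothesis on those components is not licensed. The paper avoids this by \emph{not} inducting: it keeps $\bG_1$ intact and observes that when $s$ is central in $\bG_1$ (equivalently, when $C_{\bG^*}(s)$ is a Levi) the Bonnaf\'e--Rouquier correspondent is essentially unipotent and Theorem~\ref{thm:unibhz2} applies directly (that theorem is stated for arbitrary connected reductive $\bG$, not just simple groups); otherwise $s$ is a \emph{non-central} quasi-isolated element of $\bG_1^*$, and an explicit case-by-case analysis of the possible Levi types $\bG_1$ inside each exceptional $\bG$ is performed, with careful tracking of centres and of type-$A$ factors (which are shown to be impossible precisely because for them $C_{\bG_1^*}(s)$ would be a Levi).

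Second, the assertion that a non-central quasi-isolated $\ell'$-element forces $\ell$ to be bad is false. Non-central quasi-isolated elements of the adjoint exceptional groups have small order (2, 3, 4, 5, 6 in $E_8$, say), so for a good prime like $\ell = 7$ (which does divide the Weyl group order and for which Sylow $\ell$-subgroups of $E_8(q)$ can be non-abelian) there certainly exist non-central quasi-isolated $\ell'$-elements. Proposition~\ref{prop:quasi} only covers bad $\ell$, so your argument would leave all good-$\ell$ non-unipotent quasi-isolated blocks in exceptional types unhandled. The paper disposes of them by a genuinely different mechanism: for $\ell$ odd and good, Enguehard's Jordan decomposition of blocks \cite{En08} gives a defect-preserving bijection between $\Irr(\tilde C)$ and the irreducible characters of a unipotent block of $C_{\tilde\bG_1^*}(\tilde s)^F$, after which Theorem~\ref{thm:unibhz2} applies; this involves passing to the regular embedding $\tilde\bG$, using \cite[Thm.~7.12, Prop.~7.13(b)]{KM} to control defect groups, and a workaround for $\ell=3$ via \cite{CE93b,CE99}. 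None of this appears in your proposal.

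Third, you flag the compatibility of Bonnaf\'e--Rouquier with the quotient by $Z$ as ``the main obstacle'' but do not resolve it; the paper resolves it by invoking \cite[Thm.~1.4]{KM} to show that $\bar B$ has abelian defect if and only if $\bar C$ does, and uses the fact that Jordan decomposition preserves heights. Finally, the paper disposes of the low-rank cases $G_2(q)$, ${}^3D_4(q)$, ${}^2F_4(q^2)$ and the Suzuki and Ree ${}^2G_2$ families up front by citing the explicit verifications in \cite{Hi90,DM,MaF}; these need to be either covered by the general argument or cited, and your proposal does neither.
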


\begin{proof}
We may assume that $Z$ is an $\ell$-group. The Suzuki groups and the Ree
groups $^2G_2(q^2)$ have no non-abelian Sylow subgroups for non-defining
primes. The height zero conjecture for $G_2(q)$, Steinberg's triality groups
$\tw3D_4(q)$ and the Ree groups $\tw2F_4(q^2)$ has been checked in
\cite{Hi90,DM,MaF}. Thus, we will assume that we are not in any of these cases.

Let $B$ be an $\ell$-block of $\bG^F$ and $\bar B$ the $\ell$-block of
$\bG^F/Z$ dominated by $B$. We assume that $\bar B $ has non-abelian defect
groups. Let $ s\in \bG^{*F} $ be a semisimple $\ell'$-element such that
$\Irr(B)\subseteq\cE_\ell(\bG^F,s)$. Let $ \bG_1$ be a minimal $F$-stable Levi
subgroup of $\bG $ such that $C_{\bG^*}(s) \leq \bG_1^*$, thus $s$ is
quasi-isolated in $\bG_1^*$. Let $C$ be a Bonnaf\'e--Rouquier correspondent
of $B$ in $\bG_1^F$, and $\bar C$ the block of $\bG_1^F/Z$ dominated by $C$.
Jordan decomposition induces a defect preserving bijection between
$\Irr(\bar B)$ and $\Irr(\bar C)$ and by \cite[Thm.~1.4]{KM}, $\bar B$ has
abelian defect if and only if $\bar C$ does. Thus it suffices to prove the
result for $C$.
In particular, by Theorem~\ref{thm:unibhz2}, we may assume that $s$ is not
central in $\bG_1$ and hence that $C_{\bG_1^*}(s) =C_{\bG^*}(s)$ is not a
Levi subgroup of $\bG_1^* $ (nor of $\bG^*$).

We first consider the case that $Z(\bG)^F$ is an $\ell'$-group. Let
$\bG\hookrightarrow\tilde\bG$
be a regular embedding. If $\bG$ has connected center we let $\bG=\tilde \bG$.
Let $\tilde B$ be a block of $\tilde\bG^F$ covering $B$ and let
$\tilde s \in \tilde \bG^{*F} $ be a semisimple element such that
$\Irr(\tilde B)\leq\cE(\tilde\bG^F,\tilde s)$. Then by Lemma~\ref{lem:genred}
it suffices to prove that $\tilde B$ has characters of different $\ell$-heights
(note that $Z=1$ here). Further, let $\tilde \bG_1 $ be an $F$-stable Levi
subgroup of $\tilde \bG $ containing $C_{\tilde \bG^*}(\tilde s)$ such that
$\tilde s$ is quasi-isolated in $\tilde\bG_1$ and let $\tilde C$ be a
Bonnaf\'e--Rouquier correspondent of $\tilde B$ in $\tilde \bG_1^F$. By
\cite[Thm.~7.12, Prop.~7.13(b)]{KM}, $\tilde C$ has non-abelian defect groups.
Hence it suffices to prove that $\tilde C$ has irreducible characters of
different $\ell$-heights. By the same reasoning as above, we may assume that
$s$ is not central in $\tilde \bG_1$ and hence that
$C_{\tilde \bG_1^*}(s) =C_{\tilde \bG^*}(s)$ is not a Levi subgroup of
$\tilde \bG_1^*$ (nor of $\tilde\bG^*$).

If moreover $\ell$ is odd and good
for $\tilde \bG_1$, then by \cite{En08}, there is a defect preserving bijection
between $\Irr(\tilde C)$ and $\Irr(C_0)$ for a unipotent block $C_0$ of
$C_{\tilde \bG_1^*}(\tilde s)^F$ whose defect groups are isomorphic to those
of $\tilde C$ and the result follows by Theorem~\ref{thm:unibhz2}.
Enguehard has informed us that the
prime $3$ should have been excluded from the results of \cite{En08}. However,
for classical groups with connected center Jordan decomposition commutes with
Lusztig induction (see for instance appendix to latest version of \cite{En08})
and hence by \cite[Thm.~2.5]{CE93b} and \cite[5.1,~5.2]{CE99} the prime $3$
may be included in the above.

Thus, we may assume that if $\ell$ is odd and $Z(\bG)$ is an $\ell'$-group,
then $\ell$ is bad for $\tilde \bG_1$ and hence for $\tilde \bG$ and $\bG$.
We now consider the various cases. Suppose that
$\bG$ is classical of type $B,C,D$. If $\ell=2$, then $s$ has odd order and
$C_{\bG^*}(s)$ is a Levi subgroup of $\bG^*$, a contradiction. If $\ell$ is
odd, then $\ell$ is good for $\bG$. On the other hand, $Z(\bG)$ is a $2$-group,
a contradiction.

So, $\bG$ is of exceptional type. If $\ell$ is good for $\bG$, then
$\ell\geq 5$, and in all cases $Z(\bG)$ is an $\ell'$-group, a contradiction.
Thus $\ell$ is bad for $\bG$. Then by Proposition~\ref{prop:quasi}, $\bG_1$ is
proper in $\bG$. Suppose that $\ell=5$ and so $\bG$ is of type $E_8$. Since
$Z(\bG)=1$, $5$ is bad for $\bG_1$. Thus $\bG=\bG_1$, a contradiction.

Now assume that $\ell=3$. Suppose that $\bG$ is of type $F_4$. Then all
components of $[\bG_1,\bG_1]$ are classical, hence $3$ is good for $\bG_1$
and $Z(\bG)$ is connected, a contradiction.

Suppose $\bG$ is of type $E_6$. If all components of $\bG_1$ are of type $A$,
then $C_{\bG_1^*}^\circ(s)$ is a Levi subgroup of $\bG_1$. On the other hand,
$Z(\bG_1)/Z^\circ(\bG_1)\leq Z(\bG)/Z^\circ(\bG)$ is a $3$-group, and $s$ is
a $3'$-element, hence $C_{\bG_1^*}(s)$ is connected. So, $C_{\bG_1^*}(s)$ is
a Levi subgroup of $\bG_1^*$, a contradiction.
Suppose $\bG_1$ has a component, say $\bH$ of type $D_4$ or $D_5$.
So $\bG_1= \bH Z^\circ(\bG_1)$. Since the centre of $\bH$ is a $2$-group, by
Lemma~\ref{lem:genred} we may replace $\bG_1^F/Z$ with the direct product of
$\bH^F$ and $Z^\circ(\bG_1)/Z$. Since (BHZ2) has been shown to be true for
$\bH^F$ above (here note that $\bH$ is simply-connected), $\bH^F$ has abelian
Sylow $3$-subgroups and we are done.

Suppose $\bG$ is of type $E_7$. Then $|Z(\bG)|=2 $, hence $3$ is bad for
$\tilde \bG_1$ and it follows that $[\bG_1, \bG_1] $ is of type $E_6$ (note
that if $\bG_1$ is proper in $\bG$ then $\tilde \bG_1$ is proper in
$\tilde\bG$). Denoting by $\bar s$ the image of $s$ in $[\bG_1, \bG_1]^*$
and by $D$ a block of $[\bG_1, \bG_1]^F$ covered by $C$, one sees that $D$
corresponds to one of the lines 13, 14, 15 of Table~3 of \cite{KM}. If $D$
corresponds to one of the lines 13 or 14, there are irreducible characters of
different $3$-heights in $\cE ([\bG_1,\bG_1]^F,\bar s) \cap \Irr(D)$.
But since $\bG_1$ has connected centre, and since
$Z([\bG_1, \bG_1])/Z^\circ([\bG_1, \bG_1])$ is a $3$-group and $s$ has order
prime to $3$, all characters in $\cE([\bG_1,\bG_1]^F,\bar s)$ are
$\bG_1^F$-stable
and extend to irreducible characters of $\bG_1^F$ (see \cite[Cor.~11.13]{B06}).
All irreducible characters of $\bG_1^F$ covering the same irreducible character
of $[\bG_1,\bG_1]^F$ have the same degree and every element of $\Irr(D)$ is
covered by an element of $\cE(\bG_1^F,s)\cap\Irr(C)$. Thus there exist
elements in $\Irr(C)\cap\cE(\bG_1^F,s)$ of different $3$-heights. If $D$
corresponds to line 15, then $3$ does not divide the order of $Z(\bG_1^F)$.
Hence, $\bG_1^F=Z^\circ(\bG_1^F)\times[\bG_1,\bG_1]^F$.
By \cite[Prop.~4.3]{KM}, $D$ has abelian defect groups hence so does $C$
and there is nothing to prove.

If $\bG$ is of type $E_8 $, then exactly the same arguments as in the $E_7$
case apply hence we are left with one of the following cases:
$[\bG_1,\bG_1]$ is of type $E_6+A_1$ or of type $E_7$. In the former case,
by Lemma~\ref{lem:genred} we may assume that the fixed point subgroup of the
component of type $A_1 $ is a direct factor of $\bG_1^F$ and so has abelian
Sylow $3$-subgroups. Therefore, we may assume that $[\bG_1,\bG_1]$ is of
type $E_6$ and we are done by the same argument as in the case that $\bG$ is
of type $E_7$. If $[\bG_1,\bG_1]$ has type $E_7$, then
$$|\bG_1^F:[\bG_1,\bG_1]^F Z^\circ (\bG_1)^F|
  = |[\bG_1,\bG_1]^F\cap Z^\circ(\bG_1)^F|=2,$$
hence by Lemma~\ref{lem:genred} we may assume that $\bG_1$ is simple of type
$E_7$, and we are done by Proposition~\ref{prop:quasi}.

Finally suppose that $\ell=2$. In case $\bG$ is of type $E_6$, we may replace
$\bG$ by
$\tilde\bG$ by Lemma~\ref{lem:genred} and still keep the assumption that
$\tilde\bG_1$ is proper in $\tilde\bG$. Thus, either $Z(\bG)$ is connected
or $Z(\bG)/Z^\circ(\bG)$ has order $2$ (in case $\bG$ is of type $E_7$).
Consequently, since $s$ has odd order, $C_{\bG_1^*}(s) =C_{\bG^*}(s)$ is
connected. Thus, if all components of $[\bG_1,\bG_1]$ are of classical type,
then $C_{\bG_1^*}(s)$ is a Levi subgroup of $\bG_1^*$, a contradiction.
We are left with the following cases: $\bG$ is of type $E_7$ and
$[\bG_1,\bG_1]$ is of type $E_6$, or $\bG$ is of type $E_8$ and $[\bG_1,\bG_1]$
is of type $E_6$, $E_6+A_1$ or $E_7$.

Suppose that $[\bG_1,\bG_1]$ is of type $E_6$. Since $C_{\bG_1^*}(s)$ is
connected and $s$ is quasi-isolated in $\bG_1^*$, $C_{\bG_1^*}^\circ(s)$ has
the same semisimple rank as $\bG_1^*$. Thus, $\bar s$ and $D$ correspond to one
of the lines 1, 2, 6, 7, 8 or 12 of Table~3 of \cite{KM}. In all of these
cases, there are characters in $\cE([\bG_1,\bG_1]^F,\bar s)\cap\Irr(D)$ of
different $2$-heights. Since $Z(\bG)/Z^\circ(\bG)$ is a $2$-group, every
element of $\cE([\bG_1,\bG_1]^F,\bar s)\cap\Irr(D)$ extends to an element of
$\Irr(C)\cap\cE(\bG_1^F, s)$. Since $Z$ is in the kernel of all characters
in $\cE(\bG_1^F,s)$, $\bar B$ has characters of different $2$-heights and
we are done.

Suppose $\bG $ is of type $E_8 $ and $[\bG_1,\bG_1]$ is of type $E_6+A_1$.
Then by Lemma~\ref{lem:genred}, we may assume that
$\bG_1^F =\bH_1^F \times \bH_2 ^F$, where $\bH_1^F$ is isomorphic to
$E_6(q)$ or $\tw2E_6(q)$, $\bH_2$ has connected center and $[\bH_2, \bH_2]$
has a single component of type $A_1$. Since the block of $\bH_2^F$ covered
by $C$ is quasi-isolated, we may assume that $C$ covers a unipotent (in fact
the principal) block of $\bH_2^F$. If $\bH_2^F/Z$ has non-abelian Sylow
$2$-subgroups, then we are done by Theorem~\ref{thm:unibhz2}. If the block
of $\bH_1^F$ covered by $C$ has non-abelian defect groups, then we are done
by Proposition~\ref{prop:quasi}.

Finally, assume that $\bG$ is of type $E_8 $ and $[\bG_1,\bG_1]$ is of type
$E_7$. Since $s$ is not central in $\bG_1$, $1\ne \bar s$ is a quasi-isolated
element of $[\bG_1,\bG_1]^*$. By Table~5 of \cite{KM} the block $D$ of
$[\bG_1, \bG_1]^F$ has non-abelian defect groups. Now we are done by
the same argument as given at the end of Proposition~\ref{prop:quasi}.
\end{proof}

%%%%%%%%%%%%%%%%%%%%%%%%%%%%%%%%%%%%%%%%%%%%%%%%%%%%%%%%%%%%%%%%%%%%%%%%%
\section{Brauer's height zero conjecture for quasi-simple groups } \label{sec:height0}

\begin{proof}[Proof of the Main Theorem]
We invoke the classification of finite simple groups. One direction of the
assertion has been shown in \cite[Thm.~1.1]{KM}. So we may now assume that
all $\chi\in\Irr(B)$ have height zero. We need to show that $B$ has abelian
defect groups. If $S$ is a covering group of a
sporadic simple group or of $\tw2F_4(2)'$ it can be checked using the tables
in \cite{Atl} that the only $\ell$-blocks with defect groups of order at
least $\ell^3$ and all characters in $\Irr(B)$ of height zero are the
principal 2-block of $J_1$, the principal 3-block of $O'N$ and a 2-block of
$Co_3$ with defect groups of order $2^7$. For the first two groups, Sylow
$\ell$-subgroups are abelian, and the latter block has elementary abelian
defect groups, see \cite[\S7]{La78}.
\par
Similarly, if $S$ is an exceptional covering group of a finite simple group of
Lie type, again by \cite{Atl} there is no such block of positive defect at all.
\par
The height zero conjecture for alternating groups $\fA_n$, $n\ge7$, and their
covering groups was verified in \cite{Ol93}, for example, except for the
2-blocks of the double covering $2.\fA_n$. Since the height zero conjecture
has been checked for the 2-blocks of $\fA_n$ we know that the only 2-blocks
of $2.\fA_n$ which could possibly consist of characters of height zero are
those whose defect groups in $\fA_n$ are abelian. But the latter have defect
group of order at most~4, so the defect groups in $2.\fA_n$ have order at
most~8, and for those the claim is again known by work of Olsson \cite{Ol75}.
\par
Now assume that $S$ is of Lie type. If $\ell$ is the defining characteristic
of $S$, then the result is contained in Proposition~\ref{prop:defchar}. We
may hence suppose that $\ell$ is a non-defining prime. There, Brauer's height
zero conjecture for groups of type $A_n$ has been shown by Blau and Ellers
\cite{BE}. For all the other types, the claim is shown in
Theorem~\ref{thm:nondescbhz2}.
\end{proof}

%%%%%%%%%%%%%%%%%%%%%%%%%%%%%%%%%%%%%%%%%%%%%%%%%%%%%%%%%%%%%%%%%%%%%%%%%

\end{document}